\DeclarePairedDelimiter{\ceil}{\lceil}{\rceil}
\newtheorem{theorem}{Theorem}
\newtheorem{lemma}[theorem]{Lemma}
\newtheorem{proposition}[theorem]{Proposition}
\newtheorem{conjecture}[theorem]{Conjecture}
\newtheorem{question}[theorem]{Question}
\newcommand{\ch}[1]{\chi_{td}(#1)}
\newcommand{\clone}[1]{\mathrm{cl}(#1)}
\title{Total Difference Chromatic Numbers of Regular
Infinite Graphs}
\author{{\bf Noam Benson-Tilsen}\\ benson@rpi.edu \\ {\bf Samuel Brock}\\ s.evensong187@gmail.com \\ {\bf Brandon Faunce}\\ bfaunce28@gmail.com \\ {\bf Monish Kumar}\\ monishkumar833@gmail.com \\  {\bf Noah Dokko Stein}\\ noah.stein@yale.edu \\{\bf Joshua Zelinsky} \\jzelinsky@hopkins.edu \\  }
\date{}
\begin{document}

\maketitle

\begin{abstract}  Given a graph $G$,  a \textit{$k$-total difference labeling} of the graph is a total labeling $f$ from the set of edges and vertices to the set $\{1, 2, \cdots k\}$ satisfying that for any edge $\{u,v\}$, $f(\{u,v\})=|f(u)-f(v)|$. If $G$ is a graph, then $\chi_{td}(G)$ is the minimum $k$ such that there is a $k$-total difference labeling of $G$ in which no two adjacent labels are identical. We extend prior work on total difference labeling by improving the upper bound on $\chi_{td}(K_n)$ and also by proving results concerning infinite regular graphs. 
\end{abstract}

 By a $k$-{\it vertex labeling} of a graph, we mean a function $f$ from the vertices to the positive integers $\{1, 2, \cdots k\}$ for some $k$. Similarly, by a $k$-{\it edge labeling} of a graph, we mean a function  $f$ from the edges to  $\{1, 2, \cdots k\}$ for some $k$. A $k$-{\it total labeling} is a function $f$ from the set of edges and vertices to $\{1, 2, \cdots k\}$ for some $k$. A  $k$-vertex labeling is said to be {\it proper} if no two adjacent vertices share the same label. Similarly, a $k$-edge labeling is proper if no two edges that share a vertex share a label. A {\it proper $k$-total labeling} is a $k$-total labeling such that its corresponding $k$-edge labeling is proper, its corresponding $k$-vertex labeling is proper, and no edge has the same label as either of its vertices. \\
 
 Ranjan Rohatgi and Yufei Zhang introduced the idea of a total difference labeling of a graph \cite{RZ}.\\
 
 Given a graph $G$,  a \textit{$k$-total difference labeling} of the graph is a total labeling $f$ from the set of edges and vertices to the set $\{1, 2, \cdots k\}$ satisfying that for any edge $\{u,v\}$, $f(\{u,v\})=|f(u)-f(v)|$.  Recall that a total labeling of a graph is a labeling of both the edges and vertices of a graph. In general, $f$ is a function from the union of the edge set and vertex set of $G$ (denoted by $E(G)$ and $V(G)$, respectively) to the set $\{1,2,\cdots,k\}$. We will concern ourselves with proper total difference labelings. In a proper $k$-total difference labeling, $f$ is a function from $V(G) \cup E(G)$ to the set $\{1,2,\cdots,k\}$ that  satisfies the following properties:
 \begin{enumerate}
     \item For any edge $\{u,v\}$, $f(\{u,v\})=|f(u)-f(v)|$.
     \item No two adjacent vertices have the same same label. That is, if $\{u,v\}$ is an edge, then $f(u) \neq f(v)$.
     \item No two adjacent edges have the same label. That is, if $\{u,v\}$ and $\{v,w\}$ are edges, then $f(\{u,v\}) \neq f(\{v,w\})$.
     \item No vertex has the same label as an edge incident with it. That is, if $\{u,v\}$ is an edge, then $f(u) \neq f(\{u,v\})$.
 \end{enumerate}
 
 Property (1) is the defining property of total difference labelings, while properties (2), (3), and (4) make such a labeling proper. Note that the edge labels of a total difference labeling are determined by the vertex labels. Thus, we will often abuse notation and simply refer to the labeling of the vertices as the total difference labeling.  We will typically abbreviate ``total difference labeling" to ``TDL"; we will also typically omit ``proper" when referring to proper TDLs, and unless stated otherwise a TDL may be assumed to be proper.\\
 
 Rohatgi and Zhang define $\ch{G}$ as the smallest $k$ such that $G$ has a proper $k$-total difference labeling. They calculated $\ch{G}$ for a variety of graphs, including stars, wheels, and helms, as well as providing upper and lower bounds on $\chi_{td}$ of other graphs, such as trees and complete graphs. This paper extends their work in three main ways.\\
 
 First, we provide an essentially complete description of $\ch{K_n}$ for complete graphs and use this to bound $\ch{G}$ in general for any graph $G$ in terms of its order. To do this, we introduce the idea of a specialized set of numbers we call a {\it well-spaced row} (occasionally abbreviated to ``WSR").\\
 
 Second, we calculate $\ch{G}$ for various well-behaved infinite graphs, including the infinite square lattice. We also provide lower and upper bounds for $\ch{G}$ for some other infinite graphs.\\
 
 Third, we estimate $\ch{Q_n}$ for the hypercube graph $Q_n$.\\
 
 This paper is divided into five sections. In the first section, we review aspects of Rohatgi and  Zhang's work. We also discuss other similar labeling rules.\\
 
 In the second section, we introduce the ideas of well-spaced rows and star-elimination, which are major techniques that will be used throughout the rest of the paper.\\
 
 In the third section, we calculate $\chi_{td}$ for various infinite graphs, in particular the square lattice, the hexagonal lattice, the triangular lattice, and the infinite binary tree. We also give upper and lower bounds for the cubic lattice. \\
 
 
 In the fourth section, we introduce the idea of a clone of a graph, and use this to estimate $\ch{Q_n}$ of a hypercube. \\
 
 In the fifth section, we introduce the idea of saturated labelings and saturable graphs. This leads to two classes of graphs whose TDLs are particularly nice: saturable graphs and supersaturatable graphs. \\
 
 
 \section{Earlier work}
 
 Fundamentally, all of the different labeling schemes under discussion can be thought of as extensions of graph and edge labeling. Classically, given a graph $G$, $\chi(G)$, called the coloring number of $G$, denotes the minimum number of distinct colors needed to label every vertex of $G$ such that no two adjacent vertices are the same color. Instead of using colors one can use a \textit{labeling function} $f$ from the vertices to $\{1, 2 \cdots k\}$, which allows a much more natural framework. Thus, when we speak of a  ``vertex coloring'' of a graph we will mean a labeling of a graph's vertices with positive integers. One then defines the coloring number $\chi(G)$ as the minimum $k$ such that there is a labeling function $f$ with the property that $f(u) \neq f(v)$ when $u,v\in{V(G)}$ are adjacent. In this context, the most notable result is of course the famous Four Color Theorem, which says that any planar graph $G$ satisfies $\chi(G) \leq 4$. \\
 
 One major result about the behavior of $\chi(G)$ is Brooks's theorem \cite{Brooks}, which states that for any graph $G$, $\chi(G) \leq \Delta(G)+1$, with equality if and only if $G$ is a complete graph or a cycle. Here $\Delta(G)$ is the maximum degree of any vertex of $G$.  \\ 
 
 Similarly, $k$-edge labelings have been investigated. Define $\chi'(G)$ to be the minimum $k$ such that $G$ has a proper $k$-edge labeling. A classic result of Vizing \cite{Vizing} states that $\Delta(G) \leq \chi'(G) \leq \Delta(G)+1$. \\

 Let $\chi''(G)$ be the minimum $k$ such that a proper $k$-total labeling of $G$ exists.  One of the major open problems in this area is the {\it total coloring conjecture}, which states that $\chi''(G) \leq \Delta(G)+2$. \\
 
 In the last few years, a variety of papers, such as \cite{RZ}, have looked at various ways to combine edge colorings and vertex colorings where the edge colors are a function of the vertex colors. Another recent example is \cite{SLN}, which defined an {\it edge-coloring $k$-vertex weighting} as a function $f$ from the vertices and edges of $G$ to $\{1, 2 \cdots k\}$ where for any edge $\{u,v\}$, $f(\{u,v\}) = f(u)+f(v)$, and the corresponding edge labeling is proper. They then defined $\mu'(G)$ of a graph $G$ as the minimum $k$ such that an edge-coloring $k$-vertex weighting exists. One can similarly define $\mu''(G)$ which is the minimum $k$ such that there is an edge-coloring $k$-vertex weighing which is also a proper vertex labeling. It is not hard to show that $\ch{G} \geq \mu''(G) \geq \mu'(G)$, since any total difference labeling will also be a total labeling and an edge-coloring $k$-vertex weighting.  \\
 
 For the remainder of this paper, we will concern ourselves only with total difference labelings and their behavior; recall that we will often omit the word ``proper". \\
 
 One of the basic results of \cite{RZ} is a description of total difference labeling just in terms of the vertices by introducing {\emph{ doubles}} and {\emph{triples}}. Rohatgi and Zhang proved that a total difference labeling is proper if and only if it is a proper $k$-total labeling and it avoids doubles and triples. What do we mean by doubles and triples? \\
 
 Let $f$ be a not necessarily proper total difference labeling function of $G$.  A double is a pair of adjacent vertices $u$ and $v$ with $f(u)=2f(v)$ (see Figure \ref{double}). Note that if there exists a double $\{u,v\}$ in $G$, $f(\{u,v\}) = f(u)-f(v) = 2f(v)-f(v) = f(v)$; then $f(v)$ has the same label as $f(\{u,v\})$, and the labeling due to $f$ is not a total difference labeling.  \\
  \begin{figure}
     \centering
     \begin{tikzpicture}[node distance = {45mm}, thick, main/.style = {draw, circle}]
     \node[main] (1) {a};
     \node[main] (2) [right of=1] {2a};
     \draw (1) -- (2) node [midway, above] {$2a-a=a$};
     \end{tikzpicture}
     \caption{A double.}
     \label{double}
 \end{figure}
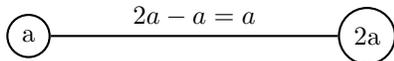
 
 There are two species of triple. The first is a set of three vertices $u$, $v$, and $w$ with $u$ adjacent to $v$ and $v$ adjacent to $w$ satisfying $f(u) = f(w)$. Notice that regardless of what label is given to $v$, we will then have $f(\{u,v\}) = f(\{v,w\})$. If we have this arrangement, we do not have a proper TDL. We will call this type of triple a {\emph{sandwich}} (see Figure \ref{sandwich}). \\
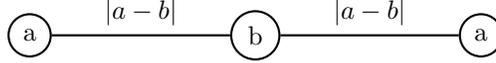
\begin{figure}
     \centering
     \begin{tikzpicture}[node distance = {30mm}, thick, main/.style = {draw, circle}]
     \node[main] (1) {a};
     \node[main] (2) [right of=1] {b};
     \node[main] (3) [right of=2] {a};
     \draw (1) -- (2) node [midway, above] {$|a-b|$};
     \draw (2) -- (3) node [midway, above] {$|a-b|$};
     \end{tikzpicture}
     \caption{A sandwich.}
     \label{sandwich}
 \end{figure}
 
 The second type of triple is a set of three vertices $u$, $v$, and $w$ with $u$ adjacent to $v$ and $v$ adjacent to $w$ such that $f(u)$, $f(v)$, and $f(w)$ form an arithmetic progression. This would also cause $f(\{u,v\}) = f(\{v,w\})$, in which case we would not have a proper TDL. We will call this type of triple a \textit{staircase} (see Figure \ref{staircase}).\\
 \begin{figure}
     \centering
     \begin{tikzpicture}[node distance = {45mm}, thick, main/.style = {draw, circle}]
     \node[main] (1) {a};
     \node[main] (2) [right of=1] {a+b};
     \node[main] (3) [right of=2] {a+2b};
     \draw (1) -- (2) node [midway, above] {$|a-(a+b)|=b$};
     \draw (2) -- (3) node [midway, above] {$|a+b-(a+2b)|=b$};
     \end{tikzpicture}
     \caption{A staircase.}
     \label{staircase}
 \end{figure}
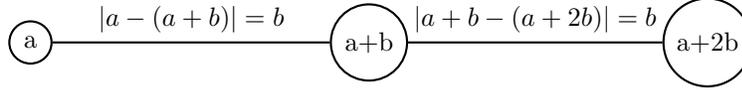
 
 Rohatgi and Zhang proved that a total difference labeling is proper if and only if the labeling has no doubles or triples. They did so by using the equivalence of the above description of doubles and triples with respect only to vertex labels and the definition of doubles and triples using both vertex labels and edge labels. \\

  Rohatgi and Zhang proved a variety of bounds on $\chi_{td}$ for different graphs. Here, we summarize the bounds we use or improve on this paper. 
  
 
 \begin{proposition}\label{RZ Xtd inequality for subgraphs} {\normalfont (Proposition 2.11, \cite{RZ})} Let $G'$ be a subgraph of $G$. Then $\ch{G'} \leq \ch{G}$.
 \end{proposition}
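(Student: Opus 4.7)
The plan is to prove the proposition by the natural restriction argument: take any proper total difference labeling of $G$ that uses at most $\ch{G}$ labels, restrict it to the vertex and edge sets of $G'$, and verify that the restriction is a proper TDL of $G'$ that still uses at most $\ch{G}$ labels.

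More concretely, I would let $k = \ch{G}$ and fix a proper $k$-total difference labeling $f : V(G) \cup E(G) \to \{1, 2, \ldots, k\}$. Since $G'$ is a subgraph of $G$, we have $V(G') \subseteq V(G)$ and $E(G') \subseteq E(G)$, so the restriction $f' := f|_{V(G') \cup E(G')}$ is well-defined and still takes values in $\{1, 2, \ldots, k\}$. I would then go through the four defining properties of a proper TDL in order: the difference condition $f'(\{u,v\}) = |f'(u) - f'(v)|$ for any edge $\{u,v\} \in E(G')$ is inherited immediately, because $\{u,v\}$ is also an edge of $G$ and $f$ satisfies the condition there; the three propriety conditions (no two adjacent vertices share a label, no two adjacent edges share a label, no vertex shares a label with an incident edge) are all properties that, if violated in $G'$, would correspond to the very same violation in $G$, contradicting properness of $f$.

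Therefore $f'$ is a proper $k$-total difference labeling of $G'$, which gives $\ch{G'} \le k = \ch{G}$.

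There is no real obstacle here; the only thing to be a little careful about is bookkeeping, namely that ``subgraph'' is being used in the standard sense where both the vertex set and the edge set of $G'$ sit inside those of $G$, so that restriction preserves adjacency and incidence. Once that is noted, the monotonicity follows without any case analysis or appeal to the double/triple/staircase reformulation.
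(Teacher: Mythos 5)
Your restriction argument is correct and complete: since every vertex, edge, adjacency, and incidence of $G'$ is also present in $G$, any proper $k$-total difference labeling of $G$ restricts to one of $G'$, giving $\ch{G'}\leq\ch{G}$. The paper itself does not reprove this proposition (it is quoted from Rohatgi and Zhang), but your proof is the standard one and there is nothing to add.
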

 
 \begin{proposition}\label{RZ upper bound for Kn}{\normalfont(Proposition 2.5,\cite{RZ})} Let $G$ be a graph with $n$ vertices. Then $\ch{G} \leq 3^{n-1}$.
 \end{proposition}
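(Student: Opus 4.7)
The plan is to exhibit an explicit labeling achieving the bound, namely a geometric labeling with base $3$. Order the vertices arbitrarily as $v_1,v_2,\dots,v_n$ and set $f(v_i)=3^{i-1}$, so the largest label used is $3^{n-1}$. By the Rohatgi--Zhang criterion recalled in the excerpt, it suffices to verify that $f$ (i) assigns distinct values to adjacent vertices and (ii) contains no doubles, sandwiches, or staircases; these together guarantee properness.

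Distinctness of vertex labels is immediate since $3^{i-1}\neq 3^{j-1}$ for $i\neq j$, and this automatically rules out sandwiches (which require two vertices to share a label). For doubles, a double would demand $3^{j-1}=2\cdot 3^{i-1}$ for some $i\neq j$, forcing $3^{j-i}=2$, which has no integer solution. Thus none of these three obstructions appear for merely arithmetic reasons.

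The only real work is ruling out staircases. A staircase would require three indices with $2\cdot 3^{j-1}=3^{i-1}+3^{k-1}$; assuming without loss of generality that $i$ is the smallest index, dividing through by $3^{i-1}$ reduces the equation to $2\cdot 3^{j-i}=1+3^{k-i}$ with $j-i,k-i\geq 1$ (and not both zero). Reducing mod $3$ gives $0\equiv 1\pmod{3}$, a contradiction. This is the main (and essentially only) computational step of the argument.

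With no doubles, sandwiches, or staircases, and with all vertex labels distinct, the induced edge labels $|3^{j-1}-3^{i-1}|$ produce a proper total difference labeling using at most $3^{n-1}$ distinct values, so $\ch{G}\leq 3^{n-1}$. The only subtlety worth flagging is checking the vertex--edge conflict condition separately, but this is equivalent to the absence of doubles ($f(u)=|f(u)-f(v)|$ forces $f(v)=2f(u)$), so it is already handled.
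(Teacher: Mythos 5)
Your approach is correct and is, in essence, the expected one: the paper states this proposition without proof, citing Proposition 2.5 of Rohatgi and Zhang, and the natural argument (labelling the vertices by powers of $3$) is exactly what you give. In the paper's own terminology you are verifying that $\{3^0,3^1,\dots,3^{n-1}\}$ is a well-spaced row, and then observing that assigning distinct elements of a well-spaced row to the vertices of an arbitrary graph rules out doubles, sandwiches, and staircases simultaneously; this is precisely the mechanism the paper isolates in Section 2 and then sharpens, via the greedy well-spaced row, to the better bound $\ch{G}\leq 3^{\ceil{\log_2 n}}$ of Theorem \ref{General greedy bound for a graph}. Your reduction of the vertex--edge conflict condition to the absence of doubles is also correct.

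There is one small hole in the staircase step. The equation $2\cdot 3^{j-1}=3^{i-1}+3^{k-1}$ is symmetric in $i$ and $k$ but not in $j$, so ``without loss of generality $i$ is the smallest index'' is not legitimate: the index $j$ of the middle vertex of the path could be the smallest of the three, in which case $3^{j-i}$ is not an integer and your reduction modulo $3$ does not apply as written. That case is disposed of instantly --- dividing by $3^{j-1}$ gives $2=3^{i-j}+3^{k-j}\geq 6$ --- but it needs to be stated. With that case added, the argument is complete.
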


Due to Proposition \ref{RZ Xtd inequality for subgraphs}, Proposition \ref{RZ upper bound for Kn} is an upper bound on $\chi_{td}$ of the complete graph $K_n$, which is the graph with the highest $\chi_{td}$ of the graphs with $n$ vertices (as all such graphs are subgraphs of $K_n$). We will in the next section construct a substantially better bound on $\ch{K_n}$.

\begin{proposition}\label{RZ path result} {\normalfont(Theorem 3.1, \cite{RZ})} Let $n \geq 4$ and let $P_n$ be the path on $n$ vertices. Then $\ch{P_n} = 4$.
\end{proposition}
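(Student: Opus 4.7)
The plan is to prove the two inequalities separately. For the lower bound I would use Proposition \ref{RZ Xtd inequality for subgraphs} to reduce to the case $n=4$: since $P_4$ is a subgraph of $P_n$ for every $n\ge 4$, we have $\ch{P_n}\ge\ch{P_4}$, and it suffices to rule out any proper $3$-TDL of $P_4$. For the upper bound I would exhibit a single periodic labeling of the infinite path using values from $\{1,2,3,4\}$ and verify that its restriction to any $n$ vertices is a proper TDL of $P_n$.

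For $\ch{P_4}\ge 4$ I would proceed by case analysis on the alphabet $\{1,2,3\}$. First, the permitted ordered adjacent pairs $(a,b)$ are those for which $a\ne b$, $(a,b)$ is not a double, and the edge label $|a-b|$ differs from both $a$ and $b$; a quick check rules out every pair except those whose underlying unordered pair is $\{1,3\}$ or $\{2,3\}$. Next, among length-$3$ walks using only these edges, the no-sandwich and no-staircase conditions leave only the sequences $(1,3,2)$ and $(2,3,1)$ (and their reversals). Finally, for either of these prefixes every choice of $v_4\in\{1,2,3\}$ fails: one value equals $v_3$ (violating rule~2), another produces a double with $v_3$, and the third creates a sandwich with $v_2$. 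This exhausts the possibilities and proves $\ch{P_4}\ge 4$.

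For the upper bound I would use the $3$-periodic labeling $v_1,v_2,v_3,v_4,\ldots = 1,3,4,1,3,4,\ldots$ and verify the required properties directly. The three ordered adjacent pairs that occur, $(1,3)$, $(3,4)$, $(4,1)$, have edge labels $2,1,3$ respectively, each differing from both of its endpoints, so rule~4 holds; these edge labels cycle as $2,1,3,2,1,3,\ldots$, so consecutive edges carry distinct labels and rule~3 holds; the three values $1,3,4$ are pairwise distinct, so rule~2 and the no-sandwich condition hold for every consecutive triple; none of $1,3,4$ is twice another, so no double appears; and none of the cyclic triples $(1,3,4)$, $(3,4,1)$, $(4,1,3)$ is arithmetic, so no staircase appears. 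Truncating to the first $n$ vertices therefore yields a proper $4$-TDL of $P_n$ and gives $\ch{P_n}\le 4$.

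The only real obstacle is the bookkeeping in the lower bound; once one notices that dropping the label $2$ from the construction eliminates the only problematic doubles and leaves a triangle of allowed adjacencies among $\{1,3,4\}$, the upper bound is essentially forced.
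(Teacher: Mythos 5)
Your proof is correct. Note that the paper does not prove this statement at all: it is imported verbatim as Theorem 3.1 of \cite{RZ}, so there is no in-paper argument to compare against. Your two halves both check out. For the lower bound, the reduction to $P_4$ via Proposition \ref{RZ Xtd inequality for subgraphs} is exactly the right move, and your case analysis is exhaustive: on the alphabet $\{1,2,3\}$ the only admissible adjacencies are $\{1,3\}$ and $\{2,3\}$ (the pair $\{1,2\}$ being both a double and an edge-label clash), the sandwich condition then forces the only viable triples to be $(1,3,2)$ and its reversal, and no fourth label extends either. For the upper bound, the $3$-periodic word $1,3,4,1,3,4,\ldots$ does satisfy all four properties as you verify, and restricting it to $n$ vertices gives a proper $4$-TDL of $P_n$. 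This is the natural elementary argument and is presumably close in spirit to what \cite{RZ} does; in particular your labels $\{1,3,4\}$ are the $3$-element greedy well-spaced row from Section 2, which is a nice way to see why the construction works.
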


\begin{proposition}\label{RZ cycle result} \normalfont{(Theorem 3.2, \cite{RZ})} Let $n>2$ and let $C_n$ be the cycle on $n$. Then $\ch{C_n} = 4$ if $n \equiv 0$ (mod 3) and $\ch{C_n} = 5$ otherwise.
\end{proposition}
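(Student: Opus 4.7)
The bound $\ch{C_n} \ge 4$ for $n \geq 4$ is immediate from Propositions \ref{RZ Xtd inequality for subgraphs} and \ref{RZ path result}, since $P_n \subseteq C_n$; for $n = 3$, the only attempt with vertex labels drawn from $\{1,2,3\}$ forces the edges $|1-2|$ and $|2-3|$ to share label $1$, so $\ch{C_3} \ge 4$ as well. The content of the proposition is then the dichotomy between $n \equiv 0$ and $n \not\equiv 0 \pmod 3$.

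For $n \equiv 0 \pmod 3$, I would exhibit the cyclic labeling $(1,3,4)^{n/3}$ and verify it using the Rohatgi--Zhang criterion (properness is equivalent to avoiding doubles, sandwiches, and staircases on top of the basic proper total labeling conditions). The induced edge sequence cycles through $(2,1,3)$, so adjacent edges differ; each vertex label $1,3,4$ differs from its two incident edge labels; no adjacent pair $(a,b)$ satisfies $b = 2a$; and every consecutive vertex triple is a cyclic shift of $(1,3,4)$, which is neither an arithmetic progression nor a sandwich.

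The harder direction is the lower bound when $n \not\equiv 0 \pmod 3$, namely that no $4$-TDL of $C_n$ exists. The plan is a two-step structural argument. Step one rules out label $2$: among pairs $\{2,a\}$ with $a \in \{1,3,4\}$, the pair $\{1,2\}$ yields edge label $1$, which collides with vertex label $1$, and $\{2,4\}$ is a double; only $\{2,3\}$ is admissible. Hence a vertex labeled $2$ has both cycle-neighbors labeled $3$, forming a sandwich $(3,2,3)$, so label $2$ cannot appear. Step two uses the restricted label set $\{1,3,4\}$: the no-sandwich rule forces three consecutive labels to be pairwise distinct, and $\{1,3,4\}$ admits no three-term AP (so no staircase condition ever bites), so every consecutive triple is a permutation of $\{1,3,4\}$. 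This determines $v_{i+3}$ as the unique element of $\{1,3,4\}\setminus\{v_{i+1},v_{i+2}\} = \{v_i\}$, so the sequence is $3$-periodic and closing it into a cycle forces $3 \mid n$.

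For the upper bound $\ch{C_n} \le 5$ when $n \not\equiv 0 \pmod 3$, I would exhibit explicit labelings: for $n = 3k+1$ with $k \ge 1$, the cyclic labeling $(1,5,3,4)(1,3,4)^{k-1}$; for $n = 3k+2$ with $k \ge 2$, the labeling $(1,5,2,3,5)(1,3,4)^{k-1}$; and for the sporadic case $n = 5$, an ad hoc labeling such as $(5,4,1,3,2)$. Each verification reduces to checking the Rohatgi--Zhang conditions across the seam between the ``defect'' block and the $(1,3,4)$ repetition. I expect the main obstacle here to be isolating $n = 5$: the cycle is too short for any ``defect plus period-$3$'' template to fit without producing a sandwich or staircase at the boundary, so a separate labeling is genuinely needed.
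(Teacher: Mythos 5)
This proposition is imported from Rohatgi--Zhang (their Theorem 3.2) and the present paper gives no proof of it, so there is nothing internal to compare against; I can only assess your argument on its own terms, and it is correct and complete. The lower bound $\ch{C_n}\ge 4$, the $(1,3,4)$-periodic construction for $3\mid n$, the exclusion of the label $2$ (only $\{2,3\}$ survives among pairs containing $2$, forcing a $(3,2,3)$ sandwich), and the forced $3$-periodicity over $\{1,3,4\}$ are all sound; I checked your explicit $5$-labelings, including the seams of $(1,5,3,4)(1,3,4)^{k-1}$ and $(1,5,2,3,5)(1,3,4)^{k-1}$ and the sporadic $(5,4,1,3,2)$ for $C_5$, and they satisfy all four TDL conditions. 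Your isolation of $n=5$ is also genuinely necessary, since $(1,5,2,3,5)$ wraps into the sandwich $(5,1,5)$ when $k=1$. This is almost certainly the same shape of argument as the original (eliminate $2$, force periodicity, exhibit templates), so no further comment is needed.
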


\begin{proposition}\label{RZ star result} {\normalfont(Theorem 4.1, \cite{RZ})} Let $K_{1,m}$ be the star graph with $m$ neighbors of the central vertex. Then $\ch{K_{1,m}} = m+1 $ when $m$ is even and $\ch{K_{1,m}} = m+2 $  when $m$ is odd. 

\end{proposition}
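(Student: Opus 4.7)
The plan is to analyze a proper TDL of $K_{1,m}$ directly via the label $a := f(c)$ of the center and the labels of the leaves $v_1,\ldots,v_m$. I would first observe that, read through the double/sandwich/staircase description, the properness conditions collapse to three statements: (a) $a, f(v_1), \ldots, f(v_m)$ are pairwise distinct (two equal leaf labels form a sandwich through $c$, and $f(v_i)=a$ directly violates property~(2)); (b) no $f(v_i)$ equals $2a$ or $a/2$ (the double condition with $c$, which also implies property~(4) for every star edge); (c) no $f(v_i)+f(v_j) = 2a$ with $i\ne j$ (otherwise $v_i, c, v_j$ is a staircase). Conversely, any assignment satisfying (a), (b), (c) is a proper TDL, so the problem is reduced to a combinatorial question about subsets of $\{1,\ldots,k\}$.

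For the upper bounds I would in each parity place the center at the top label. When $m$ is even, take $k=m+1$ and $a=k$; since $k$ is odd, $a/2$ is not an integer, $2a>k$, and the leaf labels $\{1,\ldots,m\}$ have maximum pairwise sum $2m-1 < 2a$, so (b) and (c) are vacuous. When $m$ is odd, the same construction with $k=m+2$ and $a=k$ (again $k$ odd) works. This gives $\ch{K_{1,m}}\le m+1$ for $m$ even and $\ch{K_{1,m}}\le m+2$ for $m$ odd. The inequality $\ch{K_{1,m}}\ge m+1$ is immediate from (a), since $m+1$ distinct labels are required.

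The remaining work is the lower bound $\ch{K_{1,m}} \ge m+2$ when $m$ is odd. Suppose for contradiction that a valid TDL with $k=m+1$ exists. By (a), the set $\{a, f(v_1), \ldots, f(v_m)\}$ equals $\{1,\ldots,m+1\}$, so every element of $\{1,\ldots,m+1\}\setminus\{a\}$ is actually realized as a leaf label. I then case-split on $a$. If $a$ is even, then $a/2 \in \{1,\ldots,m+1\}\setminus\{a\}$ is a leaf, violating (b). If $a$ is odd and $2a\le m+1$, then $2a$ is a leaf, again violating (b). The remaining case is $a$ odd with $2a>m+1$, that is, $(m+3)/2\le a\le m$. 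Here I count the pairs $(i, 2a-i)$ inside $\{1,\ldots,m+1\}\setminus\{a\}$ summing to $2a$: the valid range for the smaller coordinate is $2a-m-1\le i<a$, producing $m-a+1\ge 1$ disjoint pairs, each of which avoids $a$ (since $a$ is odd, $a/2$ is not an integer). Every element of $\{1,\ldots,m+1\}\setminus\{a\}$ is a leaf, so at least one such pair is a pair of leaves, yielding a staircase and contradicting (c).

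The main obstacle is this last subcase, where neither the double condition nor a direct pigeonhole suffices. It requires both the observation that odd $m$ forces the leaf labels to exhaust $\{1,\ldots,m+1\}\setminus\{a\}$ and a small count showing that the pairs symmetric about $a$ cannot all escape this set; the parity of $m+1$ is what prevents $a$ from being chosen to avoid both (b) and (c) simultaneously.
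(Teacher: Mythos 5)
Your argument is correct and complete. Note that this paper does not prove the proposition at all---it is imported verbatim as Theorem 4.1 of \cite{RZ}---so there is no in-paper proof to compare against; your reduction of properness on a star to the three conditions (a) distinctness, (b) no leaf equal to $2a$ or $a/2$, and (c) no two leaves symmetric about $a$ is exactly the right bookkeeping, your constructions with the center at the top (odd) label verify both upper bounds, and the three-way case split on $a$ for the odd-$m$ lower bound is exhaustive, with the pair count $m-a+1\ge 1$ in the final case checking out. One cosmetic slip: in that last case the pairs $(i,2a-i)$ avoid $a$ simply because $i<a<2a-i$, not because $a/2$ is non-integral (that fact is only needed in case (b)); this does not affect the validity of the proof.
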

Although not mentioned in \cite{RZ}, it is worth noting that it is an immediate consequence of Proposition \ref{RZ cycle result} and Proposition \ref{RZ star result} together with Brooks's theorem that for any connected graph $G$ with more than one vertex, one has $\chi(G) < \ch{G}$.

They also proved a lower bound for a large class of graphs. 

\begin{proposition}\label{Diameter 2 bound}{\normalfont (Proposition 2.8, \cite{RZ})} Let $G$ be a graph with $n$ vertices where the diameter of $G$ is at most 2. Then $\ch{G} \geq n$, and all the vertex labels of $G$ must be distinct in any total difference labeling of $G$.
\end{proposition}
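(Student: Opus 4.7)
The plan is to prove the distinctness of vertex labels directly, and then note that $\ch{G} \geq n$ follows immediately since the labels take values in $\{1,2,\dots,\ch{G}\}$.

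First I would fix a proper TDL $f$ of $G$ and suppose toward contradiction that there exist distinct vertices $u,v \in V(G)$ with $f(u) = f(v)$. The hypothesis that $\mathrm{diam}(G) \leq 2$ leaves only two possibilities for the distance between $u$ and $v$: either $d(u,v)=1$, meaning $u$ and $v$ are adjacent, or $d(u,v)=2$, meaning they share a common neighbor $w$.

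In the first case, $u$ and $v$ are adjacent with $f(u)=f(v)$, which directly contradicts property (2) in the definition of a proper TDL. In the second case, we have $u$ adjacent to $w$ and $w$ adjacent to $v$, with $f(u) = f(v)$. This is exactly the configuration called a sandwich (see Figure \ref{sandwich}), and as already noted in the review of Rohatgi and Zhang's work, the presence of a sandwich forces $f(\{u,w\}) = |f(u)-f(w)| = |f(v)-f(w)| = f(\{v,w\})$, so the two edges $\{u,w\}$ and $\{v,w\}$ incident to $w$ receive the same label, violating property (3). Either way we obtain a contradiction, so all vertex labels under $f$ must be distinct.

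Since $f$ assigns $n$ distinct values from $\{1,2,\dots,\ch{G}\}$ to the $n$ vertices, we must have $\ch{G} \geq n$. The main content of the argument is really just the case analysis on $d(u,v)$; no genuine obstacle arises, because the sandwich obstruction from \cite{RZ} does all the work in the non-adjacent case, and property (2) handles the adjacent case. The only thing to be slightly careful about is that we use the properness of $f$ (properties (2) and (3)), not merely the defining identity $f(\{u,v\})=|f(u)-f(v)|$.
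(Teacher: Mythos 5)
Your proof is correct: the case analysis on $d(u,v)\in\{1,2\}$, using property (2) for adjacent vertices and the sandwich obstruction (property (3)) for vertices at distance 2, is exactly the standard argument, and the bound $\ch{G}\geq n$ then follows from having $n$ distinct labels in $\{1,\dots,\ch{G}\}$. The paper itself only cites this result from \cite{RZ} without reproducing a proof, and your argument matches the intended one.
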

 
 \section{Well-spaced rows and star-elimination}
 We define a {\it well-spaced row} to be a set of positive integers such that no element is twice another element and no three elements form an arithmetic progression. Well-spaced rows are useful in finding upper bounds on $\ch{G}$, as their elements avoid doubles and staircases. \\
 
 A {\it minimal well-spaced row} is a well-spaced row that, given a finite cardinality, has the least possible maximum element. This is different from a {\it greedy well-spaced row}, which is generated from the {\it greedy well-spaced row algorithm}. The greedy well-spaced row algorithm takes in a number of elements $n$ and outputs a well-spaced row in the following manner. At each step, the algorithm appends the least positive integer such that the resulting set is a well-spaced row; it repeats until the set, a well-spaced row, has $n$ elements. For example, if we ask the algorithm for a 4-element greedy well-spaced row, it does the following:
 \begin{enumerate}
     \item Append 1.
     \item Check 2. Do not append it, because it would form a double with 1.
     \item Check 3. Append 3.
     \item Check 4. Append 4.
     \item Check 5. Do not append it, because it would form a staircase with 1 and 3.
     \item Check 6. Do not append it, because it would form a double with 3.
     \item Check 7. Do not append it, because it would form a staircase with 1 and 4.
     \item Check 8. Do not append it, because it would form a double with 4.
     \item Check 9. Append 9.
     \item Output the greedy well-spaced row with 4 elements: $\{1,3,4,9\}$.
 \end{enumerate}
 In pseudo-code, the greedy well-spaced row algorithm is the following:
 
 \begin{algorithm}
 \SetAlgoLined
 \SetKwInOut{Input}{input}\SetKwInOut{Output}{output}
 \Input{A number $n$ of elements in the output greedy WSR}
 \Output{A greedy WSR with $n$ elements}
 array $\leftarrow$ [\ ];\\
 \eIf{$n = 1$}{
 return array;
 }{
 temp $\leftarrow$ greedyWSR(n-1);\\
 max\_value $\leftarrow$ max(temp);\\
 n $\leftarrow$ max\_value + 1;\\
 appendToTemp(n);\\
 \nl\While{not isWSR(temp)}{
 removeFromTemp(n);\\
 $n = n + 1$;\\
 appendToTemp(n);\\}
 return temp;
 }
 \caption{Greedy well-spaced row algorithm}
\end{algorithm}

The above algorithm is equivalent to the following algorithm: For each positive integer $k$ (with $1\leq{k}\leq{n}$), convert $k-1$ to binary, then output the result as if it were ternary (base 3). For example, the 13th element in a greedy well-spaced row is output as follows. 
\begin{enumerate}
    \item Convert 13 to binary: $1101_2$.
    \item Read $1101_3$, which is $37_{10}$.
\end{enumerate}

Closely connected to the idea of a well-spaced row is that of a \textit{non-averaging set}. A non-averaging set is a set $S$ of non-negative integers that includes 0 such that no element of $S$ is the average of two other elements of $S$. Non-averaging sets are equivalent to well-spaced rows with the addition of the element 0. In other words, if a set $S$ is a non-averaging set, then $S-\{0\}$ is a well spaced row. Given a well-spaced row $R$, then $R \cup \{0\}$ is a non-averaging set. This is because an illegal double in a well-spaced row is equivalent to an illegal arithmetic progression with first element 0 in a non-averaging set. Non-averaging sets have been previously studied. See for example, work by Moser \cite{Moser}.  The greedy approach to producing non-averaging sets was also studied by Odlyzko and Stanley \cite{OS}.\\
 
Note that the greedy algorithm does not in general produce a minimal well-spaced row. For example, the greedy well-spaced row of length 4 is $\{1,3,4,9\}$, but $\{1,3,7,8\}$ is also a well-spaced row of length 4, with greatest element $8$, which is less than $9$. \\
 
An immediate consequence of the greedy well-spaced row construction is the following.
 
 \begin{theorem}\label{General greedy bound for a graph} For any graph $G$ with at most $n$ vertices,  $\ch{G} \leq 3^{\ceil{\log_2 n}}$.
 \end{theorem}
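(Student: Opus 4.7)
The plan is to label the vertices of $G$ injectively with elements of a suitably short well-spaced row and show that this produces a proper TDL with all labels bounded by $3^{\ceil{\log_2 n}}$. First I would verify that any injective vertex labeling drawn from a WSR $S$ of distinct positive integers yields a proper TDL. Distinctness of the vertex labels rules out Property 2 violations immediately and also rules out sandwiches (which require $f(u)=f(w)$ on a path $u$-$v$-$w$). The no-three-term-AP condition on $S$ rules out staircases; together with the absence of sandwiches this gives Property 3, since $|f(u)-f(v)|=|f(v)-f(w)|$ forces either an AP of vertex labels or $f(u)=f(w)$. Finally, the no-double condition rules out $f(v)=2f(u)$ for adjacent $u,v$, which is exactly the condition under which a vertex and an incident edge can share a label, giving Property 4.

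Next I would bound the largest element of the greedy WSR of size $n$. Using the binary-to-ternary characterization noted earlier in the excerpt, the $n$-th element of the greedy WSR is the ternary reading of the binary representation of $n$. Setting $m=\ceil{\log_2 n}$ gives $n\leq 2^m$, so the binary representation of $n$ has at most $m+1$ digits, all from $\{0,1\}$, with a leading $1$ in position $m$ occurring only in the extreme case $n=2^m$. Its ternary value is therefore at most $3^m$. Hence the greedy WSR of size $n$ lies in $\{1,\ldots,3^m\}$.

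Assigning the at most $n$ vertices of $G$ distinct labels from this WSR gives a proper TDL whose labels never exceed $3^{\ceil{\log_2 n}}$, which is what is claimed.

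The main obstacle is the verification in the first step: converting the structural conditions of a WSR (no doubles, no three-term APs) together with distinctness into the exclusion of all four forbidden patterns for a proper TDL. The binary-to-ternary counting in the second step is routine. If one prefers not to rely on the binary-to-ternary description (stated but not proven in the excerpt), a clean alternative is to exhibit the explicit WSR consisting of all positive integers whose base-$3$ expansion uses only digits in $\{0,1\}$ and has at most $m$ digits, together with $3^m$ itself; this set has $2^m$ elements with maximum $3^m$, and both WSR properties can be verified directly by digit-by-digit inspection in base $3$ (doubling a $\{0,1\}$-ternary number produces a digit $2$, and an AP among such numbers would force $a_i=c_i$ in every position).
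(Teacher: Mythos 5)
Your proposal is correct and follows essentially the same route as the paper: label the vertices injectively from the greedy well-spaced row and bound its largest element via the binary-to-ternary characterization, obtaining $3^{\ceil{\log_2 n}}$. The paper's proof is a single sentence that takes the WSR-implies-proper-TDL step and the value $3^k$ for granted; your write-up just supplies those verifications explicitly.
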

 \begin{proof} Assume that $G$ has at most $n$ vertices. Let $k$ be the smallest positive integer such that $2^k \geq n$. Then we may label the vertices of $G$ with a subset of the labels of the greedy well-spaced row with $2^k$ elements, whose largest element is precisely $3^k$.
 \end{proof}
 
 Notice that $$3^{\ceil{\log_2 n}} < 3^{1+\log_2 n} = 3 n^{\log_2 3}, $$ and so Theorem \ref{General greedy bound for a graph} gives a much tighter bound than Proposition \ref{RZ upper bound for Kn}, which gave a bound exponential in $n$.\\
 
 Given $n$, we will write $OS(n)$ to be the largest element in the greedy well-spaced row of size $n$. We will write $E(n)$ to be the largest element in  any minimal well-spaced row of size $n$. We will write $D(n)$ to be the number of distinct well-spaced rows with $n$ elements and largest element $E(n)$. We write $Mi_1(n)$ to be the minimum number which appears in any well-spaced row with $n$ elements and maximum element $E(n)$. We write $Mi_2(n)$ to be the maximum of the smallest element in each minimal well-spaced spaced row with $n$ elements and maximum element $E(n)$. Note that  $$Mi_1(n) \leq Mi_2(n) \leq E(n) \leq OS(n).$$ Also, if $D(n)=1$, then one must have $Mi_1(n) = Mi_2(n)$.
 
 We have the following data:
  \begin{center}
\begin{tabular}{ |c|c|c|c|c|c| } 
 \hline
 $n$ & $OS(n)$ & $E(n)$ & $D(n)$ & $Mi_1(n)$ & $Mi_2(n)$ \\ 
1 & 1 & 1 & 1 & 1 & 1 \\
2 & 3 & 3 & 2 & 1 & 2 \\
3 & 4 & 4 & 1 & 1 & 1 \\
4 & 9 & 8 & 4 & 1 & 2 \\
5 & 10 & 10 & 7 & 1 & 2 \\
6 & 12 & 12 & 6 & 1 & 2 \\
7 & 13 & 13 & 1 & 1 & 1 \\
8 & 27 & 19 & 2 & 1 & 2 \\
9 & 28 & 23 & 2 & 1 & 1 \\
10 & 30 & 25 & 2 & 1 & 2 \\
11 & 31 & 29 & 1 & 2 & 2 \\
12 & 36 & 31 & 2 & 1 & 1 \\
13 & 37 & 35 & 2 & 1 & 1 \\
14 & 39 & 39 & 20 & 1 & 2 \\
15 & 40 & 40 & 1 & 1 & 1 \\
16 & 81 & 50 & 14 & 1 & 3 \\
17 & 82 & 53 & 2 & 1 & 2 \\
18 & 84 & 57 & 2 & 1 & 4 \\
19 & 85 & 62 & 2 & 1 & 2 \\
20 & 90 & 70 & 4 & 1 & 2 \\
 \hline
\end{tabular}
\end{center}

In the Online Encyclopedia of Integer Sequences, $OS(n)$ is sequence A005836 and $E(n)$ is A005047. Currently, $D(n)$, $Mi_{1}(n)$, and $Mi_{2}(n)$ do not appear to be in the database. 

Based on the above, a few questions present themselves which as far as we are aware have not been asked previously about well-spaced rows (or asked equivalently about non-averaging sets):

\begin{enumerate}
    \item Are there infinitely many $n$ where the greedy algorithm gives a well-spaced row with the same maximum element as the minimal well-spaced row of the same size? That is, is $OS(n)=E(n)$ for infinitely many $n$?
    \item Can $Mi_1(n)$ grow arbitrarily large? Can $Mi_2(n)$ grow arbitrarily large? Further, will $Mi_1(n)$ or $Mi_2(n)$ take on every positive integer value?
    \item Can $D(n)$ grow arbitrarily large, and if so will it take on every positive integer value? 
    \item Is $D(n)=1$ infinitely often?
    \item Is $Mi_1(n)=1$ infinitely often? Is $Mi_2(n)=1$ infinitely often? \\
    \item Are there infinitely many $n$ where $D(n) \neq 1$ and also $Mi_1(n)=Mi_2(n)$?

Since $E(n)$ is an increasing function, it is also natural to ask about the behavior of the first difference of $E(n)$. In particular, set $J(n) = E(n+1)-E(n)$. Then we may also ask similar questions about $J(n)$. Two questions then seem particularly natural:

\item Does $J(n)$ take on every positive integer value? 
\item Is $J(n)=1$ infinitely often? 

\end{enumerate}
 
 \textit{Star-elimination} is a method that can be used to find lower bounds on the total difference labeling numbers of regular infinite graphs. Recall that each vertex in a regular graph has the same degree. In an infinite regular graph $G$, each vertex is the center of a star subgraph $K_{1,\Delta}$ of $G$, where $\Delta$ is the degree of each vertex in the graph. Because $K_{1,\Delta}$ is a subgraph of an infinite $\Delta$-regular graph $\Omega$, Proposition \ref{RZ Xtd inequality for subgraphs} yields the quick lower bound $\ch{\Omega} \geq \ch{K_{1,\Delta}}$, which is either $\Delta + 1$ or $\Delta + 2$, as per Proposition \ref{RZ star result}. This lower bound is a starting point for the main part of the star-elimination method. \\

 Given a $\Delta$-regular infinite graph, assume $x = \Delta = \ch{G}$. We then try to show that it is impossible to label the relevant star subgraph $S=K_{1,\Delta}$ given this value and given the restriction that all labels must be distinct, as per Proposition \ref{Diameter 2 bound}. If a contradiction arises, we set $x+1=\ch{G}$ and repeat until this procedure fails, at which point the star-elimination method has produced a lower bound on the true value of $\ch{G}$. In most of our cases, the star-elimination method will produce the best-possible lower bound. \\
 
 Let $v$ be a vertex in $G$. Since $G$ is $\Delta$-regular, there must be exactly $\Delta$ vertices connected to $v$. This vertex will have a label, $l$, in the set of $\{1,2,3,...x\}$.\\

 We will now assign a label to each vertex adjacent to $v$. We cannot use the label $l$ for any of them, nor can we use $2l$ or $\frac{l}{2}$ since the vertex and $v$ would form a double, resulting in an improper labeling. We also cannot use both $l-a$ and $l+a$ for two vertices and any $a$, since $l-a$, $l$, and $l+a$ would form a staircase. Finally, we can't use the same label twice in any of the adjacent vertices since they would form a sandwich around $v$. Write down all the positive integers less than $l$ in a list, which we will call $L_1$, in descending order. Write down all the integers greater than $l$ and less than or equal to $x$ in a similar list, called $L_2$. If $L_{ji}$ is index i of list $L_j$, we know we can use either $L_{1i}$ or $L_{2i}$, but not both. In addition, we can't use $L_{ji}$ if it is either $\frac{l}{2}$, $l$, $2l$, or if it is less than 1 or greater than $x$. For each value of $i$, if either $L_{1i}$ or $L_{2i}$ exists, we can arbitrarily label one of the vertices adjacent to $v$ with one of them. This is the maximum number of vertices we can label for this $l$. If we are not able to label all of the adjacent vertices with this method for any values of $l$, then $\ch{G} > x$, since you must need more labels to complete a proper labeling. If we are able, $x$ is a definite lower bound for $\ch{G}$, since all smaller $x$ cannot be $\ch{G}$. 
 
 To find a contradiction in $x=\ch{G}$, we assume each value $k$ from 1 to $x$ is the label of the center of a star $S$, because in a regular infinite graph each label is assigned to the center of some such $S$.\\ 
 
 In each case, we find the number of vertex labels that could neighbor the center of $S$. We start with a given $k$ which is at most $x$, and assume that $k$ is the center of a star. We then list all  the possible labels, from 1 to $x$. We first note that $k$ itself is removed immediately from this list, as the center's label is $k$, so none of its neighbors can be labeled $k$.\\
 
 When $k$ is even we may also remove $\frac{k}{2}$ from our list, as it would create a double with $k$.  Similarly, we may  remove $2k$. Note that we may have only one element from each pair of numbers that produces a triple with $k$ as the second element. For example, if we have assumed 4 to be the center label, we must eliminate one of each of 1 and 7, 2 and 6, and 3 and 5 \textbf{---} note that 2 would have already been removed in the previous step. If the number of available labels remaining is less than the number of vertices $\Delta$ neighboring the center of $S$, then $k$ cannot be a vertex label. We repeat this procedure for all labels from 1 to $x$, marking each label as either ``possible" or ``impossible". The order in which we check the labels from 1 to $x$ is tactical, as eliminating some values of $k$ will depend on the prior elimination of other values of $k$. If at any point the total number of ``possible" labels for vertices is less than $\Delta+1$, then we have produced a contradiction and may conclude that $\ch{G} > x$.
 
 
 A more formal way of expressing star-elimination is as follows. Given a finite set of positive integers $A$, and $j$ an element of $A$, we will say that a subset $B$ of $A$ is $j$-acceptable if $B$ does not contain $2j$, $j/2$ or a three term arithmetic sequence with $j$ as the middle term.  We say that that $j$ is $k$-star-vulnerable from $A$ if the cardinality of any $j$-acceptable subset of $A$ is strictly less than $k$.
 
 A star-elimination sequence with respect to $k$ and $n$ then is a sequence of sets, $S_0$, $S_1$, $S_2, \cdots ,S_m$ satisfying three properties:
 \begin{enumerate}
 \item $S_0= \{1, 2, 3, ..., n\}$.
 \item For any $0 < i \leq m$, $S_i \subset S_{i-1}$.
 \item If $j \in S_{i-1}/S_i$ then $j$ is $k$-star vulnerable.\\
 \end{enumerate}
 The star-elimination method is essentially the observation that if there is a star-elimination sequence with respect to $k$ and $n$, then there cannot be a $k$-regular total difference labeling with all labels less than or equal to $n$.

 \section{Total difference labeling of infinite graphs}
 
 We begin by finding the total difference chromatic number of the infinite square lattice, which we denote by $\Omega_S$.

 \begin{theorem}  \label{Square lattice TDL is 8} We have $\ch{\Omega_S} = 8$.
 \end{theorem}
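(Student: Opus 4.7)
My plan is to prove both inequalities $\ch{\Omega_S} \geq 8$ and $\ch{\Omega_S} \leq 8$. For the lower bound, I would apply the star-elimination method to $\Omega_S$, which is $4$-regular. Taking $k = 4$ and $n = 7$, I would construct a star-elimination sequence that eliminates every element of $\{1,\dots,7\}$, thereby proving that no proper $7$-TDL of $\Omega_S$ can exist. I expect the cascade to start by eliminating $j = 3$: after removing $2j = 6$, the pool $\{1,2,4,5,7\}$ splits into the AP-pairs $\{1,5\}$ and $\{2,4\}$ plus the singleton $\{7\}$, so at most three elements can be chosen, one fewer than the four neighbors required of a star-center in $\Omega_S$. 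With $3$ gone, $j = 2$ becomes vulnerable (its pool reduces to $\{5,6,7\}$); removing $2$ makes $j = 4$ vulnerable (the AP-pair $\{1,7\}$ now bites); removing $4$ makes $j = 1$ vulnerable; and at that point only $\{5,6,7\}$ remain, each of which is trivially vulnerable since only two candidate neighbors survive. This yields a full star-elimination sequence and proves $\ch{\Omega_S} > 7$.

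For the upper bound, I would exhibit an explicit doubly periodic proper $8$-TDL of $\Omega_S$. A preliminary check with $n = 8$ shows that the label $4$ remains $4$-star-vulnerable: its candidate pool $\{1,3,5,6,7\}$ (after removing $4$ itself, $2 \cdot 4 = 8$, and $4/2 = 2$) has acceptable subsets of size at most three, blocked by the AP-pairs $\{3,5\}$ and $\{1,7\}$. Hence the construction must avoid $4$ entirely and draw from $\{1,2,3,5,6,7,8\}$. I would then search for a labeling given by a small fundamental tile (most naturally $3 \times 3$ or $4 \times 4$) and verify on a representative orbit the three vertex-level conditions characterizing a proper TDL: no two adjacent labels are equal or in $2{:}1$ ratio (no double), every vertex has four distinct neighbor labels (no sandwich), and no length-two path has labels in arithmetic progression (no staircase). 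The remaining properness conditions---proper edge labeling and no vertex/edge clash---follow automatically from these three.

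The main obstacle is producing such a tile. Since the minimal well-spaced row of size five has maximum element $E(5) = 10 > 8$, no five labels in $\{1,\dots,8\}$ form a WSR; hence any labeling that uses five or more distinct labels necessarily contains some doubles (between $1$ and $2$, or between $3$ and $6$) and some three-term APs as a set, and the tile must be arranged so that none of these forbidden configurations appear on adjacent pairs or length-two paths. Once a candidate tile is found the verification reduces to a finite enumeration of neighborhoods, so the real effort is the search itself; the periodic extension to $\mathbb{Z}^2$ then automatically inherits all the required properties, completing the proof.
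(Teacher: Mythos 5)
Your overall strategy is the same as the paper's: star-elimination for the lower bound and an explicit doubly periodic labeling for the upper bound. Your lower-bound argument is complete and correct. The elimination order differs from the paper's (the paper eliminates $4$, then $3$, then $6$, and stops once only four labels remain, which already contradicts the five distinct labels a $K_{1,4}$ star requires; you begin with $3$ and cascade through the entire set), but your cascade is valid at every step, and eliminating all of $\{1,\dots,7\}$ is certainly sufficient. Your implicit observation that refuting a $7$-TDL automatically refutes a $5$- or $6$-TDL is also correct, so you do not need the paper's preliminary steps at $x=5$ and $x=6$.

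The gap is in the upper bound: you never exhibit the tile, and the existence of a periodic $8$-TDL is precisely the content of that half of the theorem. Everything you say about the search is right --- $4$ must be avoided since it remains star-vulnerable even with labels up to $8$; no five labels in $\{1,\dots,8\}$ form a well-spaced row since $E(5)=10$; and the verification reduces to checking doubles, sandwiches, and staircases on a fundamental domain --- but ``a suitable tile should exist and I would find it by search'' does not establish that one exists. The paper closes exactly this gap with an explicit construction on the six labels $\{1,2,3,6,7,8\}$ (note it discards $5$ as well as $4$): rows of horizontal period three alternating between the pattern $2,3,7$ and the pattern $8,1,6$, with each successive pair of rows shifted horizontally by one position, after which the properness conditions are checked on finitely many neighborhoods. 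To complete your proof you must either produce such a tile and run that finite verification, or give some other constructive argument; without it, $\ch{\Omega_S}\le 8$ is unproven.
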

 \begin{proof} First, we use star-elimination to prove that $\ch{\Omega_S} \geq 8$. 
  Then we use well-spaced rows to show that $\ch{\Omega_S} \leq 8$. 
 
 Let us first find a lower bound using star-elimination. Because, by Proposition \ref{RZ star result}, $\ch{K_{1,4}}=4+1=5$ (note that each vertex is the center of a $K_{1,4}$), we have a lower bound of 5, so $\ch{\Omega_S}\geq5$. Assume $\ch{\Omega_S}=5$. Now using star-elimination it is straightforward to show that $\ch{\Omega_S}\neq5$ and $\ch{\Omega_S}\neq6$, so $\ch{\Omega_S}\geq7$.

 We now demonstrate star-elimination to show that $\ch{\Omega_S}>7$. Begin by assuming $\ch{\Omega_S}=7$. In this case, we will need to eliminate three of the seven potential labels $\{1,2,3,4,5,6,7\}$ to show a contradiction.
 \begin{enumerate}
     \item Assume that 4 is included somewhere in a 7-TDL of $\Omega_S$. The vertex $v$ with label 4 has four neighbors, each necessarily distinct (to avoid sandwiches). We first remove 4 from the list of possible labels, as $v$ cannot be adjacent to a vertex labeled 4. We also remove $\frac{4}{2}=2$, leaving $\{1,3,5,6,7\}$. We finally remove one of 1 and 7 as well as one of 3 and 5. This leaves only three possible labels for the four neighbors of $v$, which is a contradiction, so no vertex can be labeled with 4.
     \item Assume that 3 is the label of some vertex $v$ in $\Omega_S$. We remove 3, 4, and $3\times2=6$ from the list of possible labels of the neighbors of $v$, leaving $\{1,2,5,7\}$; we then remove one of 1 and 5, again leaving only three labels for the four neighbors of $v$, which is a contradiction, so no vertex can be labeled with 3. 
     \item Assume that 6 is the label of some vertex $v$ in $\Omega_S$. We remove 3, 4, and 6 from the list of possible labels of the neighbors of $v$, leaving $\{1,2,5,7\}$. We also remove one of 5 and 7, leaving three labels for the neighbors of $v$, which is a contradiction, so no vertex can be labeled with 6.
 \end{enumerate}
 Because this leaves only four distinct labels for a TDL of $\Omega_S$, we have a contradiction and $\ch{\Omega_S}>7$. Thus, $\ch{\Omega_S} \geq 8$. \\
 
 We now find an upper bound using well-spaced rows.  Clearly each star subgraph $K_{1,4}$ of $\Omega_S$ uses five distinct labels. The greatest element of a minimal well-spaced row with five elements is 10, as in $W=\{1,3,4,9,10\}$ (which incidentally is the same as the greedy well-spaced row of five elements). Labeling a ``row" of vertices in $\Omega_S$ (hence the name ``well-spaced rows") with the elements of $W$, then labeling an adjacent row in the same way but with labels shifted two vertices in either direction, and so on, gives a 10-TDL of $\Omega_S$, showing $\ch{\Omega_S}\leq10$ (see Figure \ref{square_lattice_10tdl}).\\
 
  \begin{figure}
     \centering
     \begin{tikzpicture}[node distance = {15mm}, thick, main/.style = {draw, circle}]
     \node[main] (1) {1};
     \node[main] (2) [right of=1] {3};
     \node[main] (3) [right of=2] {4};
     \node[main] (4) [right of=3] {9};
     \node[main] (5) [right of=4] {10};
     \node[main] (6) [below of=1] {9};
     \node[main] (7) [below of=2] {10};
     \node[main] (8) [below of=3] {1};
     \node[main] (9) [below of=4] {3};
     \node[main] (10) [below of=5] {4};
     \node[main] (11) [below of=6] {3};
     \node[main] (12) [below of=7] {4};
     \node[main] (13) [below of=8] {9};
     \node[main] (14) [below of=9] {10};
     \node[main] (15) [below of=10] {1};
     \draw (1) -- (2);
     \draw (1) -- (6);
     \draw (2) -- (3);
     \draw (2) -- (7);
     \draw (3) -- (4);
     \draw (3) -- (8);
     \draw (4) -- (5);
     \draw (4) -- (9);
     \draw (5) -- (10);
     \draw (6) -- (7);
     \draw (6) -- (11);
     \draw (7) -- (8);
     \draw (7) -- (12);
     \draw (8) -- (9);
     \draw (8) -- (13);
     \draw (9) -- (10);
     \draw (9) -- (14);
     \draw (10) -- (15);
     \draw (11) -- (12);
     \draw (12) -- (13);
     \draw (13) -- (14);
     \draw (14) -- (15);
     \end{tikzpicture}
     \caption{Subgraph of $\Omega_S$ and corresponding 10-total difference labeling.}
     \label{square_lattice_10tdl}
 \end{figure}
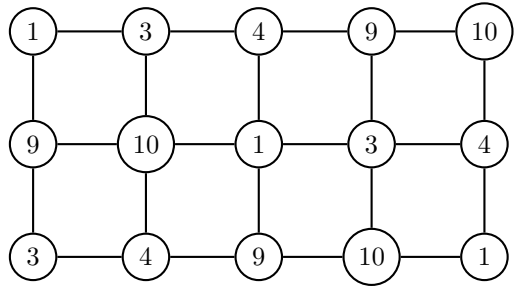

  We can improve our weak upper bound from 10 by increasing  the number of distinct labels being used. The six labels $\{1,2,3,6,7,8\}$  contain two potential doubles and two potential staircases. However, they can be arranged to form a valid 8-total difference labeling of $\Omega_S$, as shown in Figure \ref{square_lattice_8tdl}.
 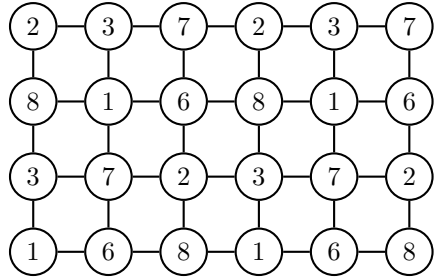
\begin{figure}
     \centering
     \begin{tikzpicture}[node distance = {10mm}, thick, main/.style = {draw, circle}]
     \node[main] (1) {2};
     \node[main] (2) [right of=1] {3};
     \node[main] (3) [right of=2] {7};
     \node[main] (4) [right of=3] {2};
     \node[main] (5) [right of=4] {3};
     \node[main] (6) [right of=5] {7};
     \node[main] (7) [below of=1] {8};
     \node[main] (8) [below of=2] {1};
     \node[main] (9) [below of=3] {6};
     \node[main] (10) [below of=4] {8};
     \node[main] (11) [below of=5] {1};
     \node[main] (12) [below of=6] {6};
     \node[main] (13) [below of=7] {3};
     \node[main] (14) [below of=8] {7};
     \node[main] (15) [below of=9] {2};
     \node[main] (16) [below of=10] {3};
     \node[main] (17) [below of=11] {7};
     \node[main] (18) [below of=12] {2};
     \node[main] (19) [below of=13] {1};
     \node[main] (20) [below of=14] {6};
     \node[main] (21) [below of=15] {8};
     \node[main] (22) [below of=16] {1};
     \node[main] (23) [below of=17] {6};
     \node[main] (24) [below of=18] {8};
     \draw (1) -- (2);
    \draw (2) -- (3);
    \draw (3) -- (4);
    \draw (4) -- (5);
    \draw (5) -- (6);
    \draw (7) -- (8);
    \draw (8) -- (9);
    \draw (9) -- (10);
    \draw (10) -- (11);
    \draw (11) -- (12);
    \draw (13) -- (14);
    \draw (14) -- (15);
    \draw (15) -- (16);
    \draw (16) -- (17);
    \draw (17) -- (18);
    \draw (19) -- (20);
    \draw (20) -- (21);
    \draw (21) -- (22);
    \draw (22) -- (23);
    \draw (23) -- (24);
    \draw (1) -- (7);
    \draw (2) -- (8);
    \draw (3) -- (9);
    \draw (4) -- (10);
    \draw (5) -- (11);
    \draw (6) -- (12);
    \draw (7) -- (13);
    \draw (8) -- (14);
    \draw (9) -- (15);
    \draw (10) -- (16);
    \draw (11) -- (17);
    \draw (12) -- (18);
    \draw (13) -- (19);
    \draw (14) -- (20);
    \draw (15) -- (21);
    \draw (16) -- (22);
    \draw (17) -- (23);
    \draw (18) -- (24);
     \end{tikzpicture}
     \caption{Subgraph of $\Omega_S$ and corresponding 8-total difference labeling.}
     \label{square_lattice_8tdl}
 \end{figure}
 
 \end{proof}

 Finding the total difference chromatic numbers of the infinite hexagonal and triangular lattices is similar. We denote the infinite hexagonal lattice by $\Omega_H$ and follow a procedure similar to the one used for $\Omega_S$, using well-spaced rows and star-elimination for lower and upper bounds.

\begin{theorem} We have $\ch{\Omega_H} = 7$.

\end{theorem}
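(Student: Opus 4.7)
The plan is to prove $\chi_{td}(\Omega_H) = 7$ in two matching halves, following the template of the proof of Theorem~\ref{Square lattice TDL is 8}.

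For the lower bound I would use star-elimination. Since $\Omega_H$ is $3$-regular, Proposition~\ref{RZ star result} already gives $\chi_{td}(\Omega_H) \geq \chi_{td}(K_{1,3}) = 5$. To reach $\geq 7$, I would construct a star-elimination sequence for $k=3$ and $n=6$: starting from $S_0 = \{1,\dots,6\}$, first eliminate centre $3$ (forbidden labels $\{3,6\}$ together with at most one of each staircase pair $(1,5)$, $(2,4)$ leave only two acceptable neighbours); then eliminate centre $2$ (forbidden labels $\{1,2,4\}$ leave only $\{5,6\}$); then eliminate centre $5$ (the pair $(4,6)$ forces a further cut in $\{1,4,6\}$ down to size two). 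These three eliminations shrink the usable palette to $\{1,4,6\}$, fewer than the four distinct labels required in any $K_{1,3}$-neighbourhood of $\Omega_H$, so no $6$-TDL can exist.

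For the upper bound I would exhibit an explicit periodic $7$-TDL using the bipartite structure of $\Omega_H$. Write $V(\Omega_H) = V_A \sqcup V_B$ for the two sublattices, and $3$-colour each (via the natural triangular-lattice tiling) as $V_A = A_1 \sqcup A_2 \sqcup A_3$ and $V_B = B_1 \sqcup B_2 \sqcup B_3$ in such a way that every vertex has exactly one neighbour in each opposite-side colour class. I would then label the three $A$-classes by $\{1,2,4\}$ and the three $B$-classes by $\{5,6,7\}$. Every $K_{1,3}$-star then contains the centre together with all three labels of the opposite side, giving four distinct values (so no sandwiches and no adjacent repeats). Doubles fail because $2 \cdot \{1,2,4\} = \{2,4,8\}$ is disjoint from $\{5,6,7\}$, and $2 \cdot \{5,6,7\} = \{10,12,14\}$ is disjoint from $\{1,2,4\}$. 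A staircase on a path $u$--$v$--$w$ would require $f(v) = \tfrac{1}{2}(f(u)+f(w))$; for $u,w \in V_A$ this midpoint lies in $\{1.5, 2.5, 3\}$, outside $V_B$'s palette, and for $u,w \in V_B$ in $\{5.5, 6, 6.5\}$, outside $V_A$'s palette.

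The main obstacle is the upper bound. A direct well-spaced-row argument yields only $\chi_{td}(\Omega_H) \leq 8$, because the minimal well-spaced row of size $4$ is $\{1,3,7,8\}$ with maximum $8$. Getting down to $7$ requires abandoning well-spaced rows entirely and instead choosing an asymmetric six-label palette partitioned across the bipartition, so that the double and staircase conditions reduce to finite arithmetic checks at the level of the six colour classes rather than any global path analysis. The specific choice $\{1,2,4\} \sqcup \{5,6,7\}$ works because of the tidy arithmetic separation of the two halves; identifying any such palette is the essential combinatorial step, after which the verification is automatic from the bipartite $3$-colouring.
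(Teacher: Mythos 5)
Your lower bound is the same star-elimination argument as the paper's: eliminate $3$, then $2$, then $5$ from $\{1,\dots,6\}$, leaving only $\{1,4,6\}$, too few for the four distinct labels that a $K_{1,3}$ neighbourhood requires (and since any $5$-TDL is in particular a $6$-TDL, this disposes of the cases $5$ and $6$ simultaneously). Your upper bound, however, takes a genuinely different route. The paper labels the ``columns'' of the brick-wall drawing of $\Omega_H$ with the periodic sequence of pairs $\{4,3\},\{6,1\},\{2,5\},\{7,3\},\{5,2\},\{1,7\}$ and verifies the resulting labeling directly (Figure \ref{hex_lattice_7tdl}); you instead exploit the bipartition $V_A\sqcup V_B$, give each side a $3$-colouring in which every vertex has exactly one neighbour in each opposite-side class, and assign the palettes $\{1,2,4\}$ and $\{5,6,7\}$. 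The one point you leave implicit --- that such a pair of $3$-colourings exists --- is true and easily made explicit: in the brick-wall representation the three neighbours of the vertex in column $i$ occupy columns $i-1$, $i$, $i+1$, so colouring by column index modulo $3$ works (equivalently, each sublattice is a triangular lattice and the three neighbours of a vertex form a face of the opposite triangular lattice, which is properly $3$-colourable). Granting that, your verification is cleaner than an inspection of a figure: sandwiches and adjacent repeats are excluded because every star carries four distinct labels, and doubles and staircases reduce to the finite checks $2\cdot\{1,2,4\}\cap\{5,6,7\}=\emptyset$, $2\cdot\{5,6,7\}\cap\{1,2,4\}=\emptyset$, and the midpoint computations you list. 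Your construction also shows the mildly stronger fact that a $7$-TDL of $\Omega_H$ need not use the label $3$, whereas the paper's labeling uses all seven labels. Both proofs are correct; yours trades an explicit periodic figure for a short structural argument whose correctness is checked at the level of the two palettes.
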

\begin{proof}
We first find a lower bound on $\ch{\Omega_H}$ using star-elimination. Because $\Omega_H$ is 3-regular, each vertex is the center of a $K_{1,3}$; we therefore have a lower bound of $\ch{K_{1,3}}=3+2=5$. So we first assume $\ch{\Omega_H}=5$.
 It is again straightforward to show a contradiction in this case, so we will start by contradicting that $\ch{\Omega_H}=6$.
\begin{enumerate}
    \item Assume 3 appears in a 6-TDL of $\Omega_H$. We eliminate 3 and 6, as well as one of 1 and 5 and one of 2 and 4, leaving two possible labels for its neighbor, which is a contradiction; 3 does not appear.
    \item Assume 2 appears. We remove 1, 2, 3, and 4, leaving only 5 and 6, which is a contradiction; 2 does not appear.
    \item Assume 5 appears. We remove 2, 3, and 5, as well as one of 4 and 6, leaving two possible neighboring labels, which is a contradiction; 5 does not appear.
\end{enumerate}
We are left with the three labels $\{1,4,6\}$ for a 6-TDL of $\Omega_H$, which is impossible. Therefore $\ch{\Omega_H}>6$. Star-elimination fails to produce a contradiction for $\ch{\Omega_H}=7$, so $\ch{\Omega_H} \geq 7$.

Recall that each vertex of $\Omega_H$ is the center of a $K_{1,3}$. The greatest element of a minimal well-spaced row with four elements is 8, as in $W=\{1,3,7,8\}$. The construction of ``rows" is not as obvious for $\Omega_H$ as it is for $\Omega_S$, but we can structure $\Omega_H$ as a subgraph of $\Omega_S$ as follows: Construct $\Omega_S$, then remove alternating vertical edges within a row, then shift horizontally by one vertex, repeat in the adjacent rows, and so on, as shown below in Figure \ref{hex_subgraph_of_square}. 
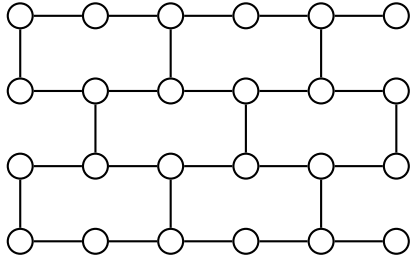
\begin{figure}
    \centering
    \begin{tikzpicture}[node distance = {10mm}, thick, main/.style = {draw, circle}]
    \node[main] (1) {};
     \node[main] (2) [right of=1] {};
     \node[main] (3) [right of=2] {};
     \node[main] (4) [right of=3] {};
     \node[main] (5) [right of=4] {};
     \node[main] (6) [right of=5] {};
     \node[main] (7) [below of=1] {};
     \node[main] (8) [below of=2] {};
     \node[main] (9) [below of=3] {};
     \node[main] (10) [below of=4] {};
     \node[main] (11) [below of=5] {};
     \node[main] (12) [below of=6] {};
     \node[main] (13) [below of=7] {};
     \node[main] (14) [below of=8] {};
     \node[main] (15) [below of=9] {};
     \node[main] (16) [below of=10] {};
     \node[main] (17) [below of=11] {};
     \node[main] (18) [below of=12] {};
     \node[main] (19) [below of=13] {};
     \node[main] (20) [below of=14] {};
     \node[main] (21) [below of=15] {};
     \node[main] (22) [below of=16] {};
     \node[main] (23) [below of=17] {};
     \node[main] (24) [below of=18] {};
     \draw (1) -- (2);
    \draw (2) -- (3);
    \draw (3) -- (4);
    \draw (4) -- (5);
    \draw (5) -- (6);
    \draw (7) -- (8);
    \draw (8) -- (9);
    \draw (9) -- (10);
    \draw (10) -- (11);
    \draw (11) -- (12);
    \draw (13) -- (14);
    \draw (14) -- (15);
    \draw (15) -- (16);
    \draw (16) -- (17);
    \draw (17) -- (18);
    \draw (19) -- (20);
    \draw (20) -- (21);
    \draw (21) -- (22);
    \draw (22) -- (23);
    \draw (23) -- (24);
    \draw (1) -- (7);
    \draw (3) -- (9);
    \draw (5) -- (11);
    \draw (8) -- (14);
    \draw (10) -- (16);
    \draw (12) -- (18);
    \draw (13) -- (19);
    \draw (15) -- (21);
    \draw (17) -- (23);
     \end{tikzpicture}
    \caption{Representation of $\Omega_H$ as a subgraph of $\Omega_S$.}
    \label{hex_subgraph_of_square}
\end{figure}
Assign a minimal well-spaced row with four elements, such as $W$, to each row of the graph, shifted horizontally by two vertices in each adjacent row, as in the upper-bound construction for $\ch{\Omega_S}$ in Figure \ref{square_lattice_10tdl}; see Figure \ref{hex_lattice_8tdl} for a subgraph of $\Omega_H$ with this labeling. Notice that this results in pairs of labels in each ``column" of vertices, which is a useful pattern.
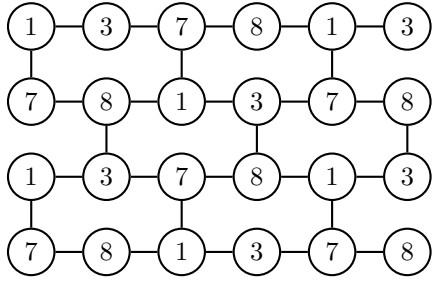
\begin{figure}
    \centering
    \begin{tikzpicture}[node distance = {10mm}, thick, main/.style = {draw, circle}]
    \node[main] (1) {1};
     \node[main] (2) [right of=1] {3};
     \node[main] (3) [right of=2] {7};
     \node[main] (4) [right of=3] {8};
     \node[main] (5) [right of=4] {1};
     \node[main] (6) [right of=5] {3};
     \node[main] (7) [below of=1] {7};
     \node[main] (8) [below of=2] {8};
     \node[main] (9) [below of=3] {1};
     \node[main] (10) [below of=4] {3};
     \node[main] (11) [below of=5] {7};
     \node[main] (12) [below of=6] {8};
     \node[main] (13) [below of=7] {1};
     \node[main] (14) [below of=8] {3};
     \node[main] (15) [below of=9] {7};
     \node[main] (16) [below of=10] {8};
     \node[main] (17) [below of=11] {1};
     \node[main] (18) [below of=12] {3};
     \node[main] (19) [below of=13] {7};
     \node[main] (20) [below of=14] {8};
     \node[main] (21) [below of=15] {1};
     \node[main] (22) [below of=16] {3};
     \node[main] (23) [below of=17] {7};
     \node[main] (24) [below of=18] {8};
     \draw (1) -- (2);
    \draw (2) -- (3);
    \draw (3) -- (4);
    \draw (4) -- (5);
    \draw (5) -- (6);
    \draw (7) -- (8);
    \draw (8) -- (9);
    \draw (9) -- (10);
    \draw (10) -- (11);
    \draw (11) -- (12);
    \draw (13) -- (14);
    \draw (14) -- (15);
    \draw (15) -- (16);
    \draw (16) -- (17);
    \draw (17) -- (18);
    \draw (19) -- (20);
    \draw (20) -- (21);
    \draw (21) -- (22);
    \draw (22) -- (23);
    \draw (23) -- (24);
    \draw (1) -- (7);
    \draw (3) -- (9);
    \draw (5) -- (11);
    \draw (8) -- (14);
    \draw (10) -- (16);
    \draw (12) -- (18);
    \draw (13) -- (19);
    \draw (15) -- (21);
    \draw (17) -- (23);
     \end{tikzpicture}
    \caption{Upper-bound 8-TDL of subgraph $\Omega_H$.}
    \label{hex_lattice_8tdl}
\end{figure}

 We now know $7 \leq \ch{\Omega_H} \leq 8$, and in fact we can find a construction with $\ch{\Omega_H}=7$, using all 7 labels $\{1,2,3,4,5,6,7\}$. We do this by labeling the columns of vertices with the pairs of labels $\{4,3\}$, $\{6,1\}$, $\{2,5\}$, $\{7,3\}$, $\{5,2\}$, and $\{1,7\}$, as shown in Figure \ref{hex_lattice_7tdl}, similar to the construction in Figure \ref{hex_lattice_8tdl}.
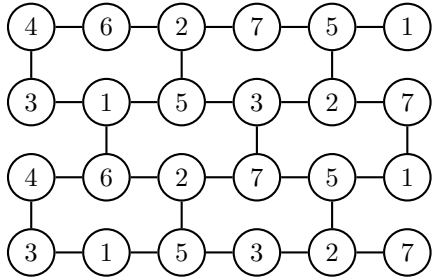
\begin{figure}
    \centering
    \begin{tikzpicture}[node distance = {10mm}, thick, main/.style = {draw, circle}]
    \node[main] (1) {4};
     \node[main] (2) [right of=1] {6};
     \node[main] (3) [right of=2] {2};
     \node[main] (4) [right of=3] {7};
     \node[main] (5) [right of=4] {5};
     \node[main] (6) [right of=5] {1};
     \node[main] (7) [below of=1] {3};
     \node[main] (8) [below of=2] {1};
     \node[main] (9) [below of=3] {5};
     \node[main] (10) [below of=4] {3};
     \node[main] (11) [below of=5] {2};
     \node[main] (12) [below of=6] {7};
     \node[main] (13) [below of=7] {4};
     \node[main] (14) [below of=8] {6};
     \node[main] (15) [below of=9] {2};
     \node[main] (16) [below of=10] {7};
     \node[main] (17) [below of=11] {5};
     \node[main] (18) [below of=12] {1};
     \node[main] (19) [below of=13] {3};
     \node[main] (20) [below of=14] {1};
     \node[main] (21) [below of=15] {5};
     \node[main] (22) [below of=16] {3};
     \node[main] (23) [below of=17] {2};
     \node[main] (24) [below of=18] {7};
     \draw (1) -- (2);
    \draw (2) -- (3);
    \draw (3) -- (4);
    \draw (4) -- (5);
    \draw (5) -- (6);
    \draw (7) -- (8);
    \draw (8) -- (9);
    \draw (9) -- (10);
    \draw (10) -- (11);
    \draw (11) -- (12);
    \draw (13) -- (14);
    \draw (14) -- (15);
    \draw (15) -- (16);
    \draw (16) -- (17);
    \draw (17) -- (18);
    \draw (19) -- (20);
    \draw (20) -- (21);
    \draw (21) -- (22);
    \draw (22) -- (23);
    \draw (23) -- (24);
    \draw (1) -- (7);
    \draw (3) -- (9);
    \draw (5) -- (11);
    \draw (8) -- (14);
    \draw (10) -- (16);
    \draw (12) -- (18);
    \draw (13) -- (19);
    \draw (15) -- (21);
    \draw (17) -- (23);
     \end{tikzpicture}
    \caption{Subgraph of $\Omega_H$ with 7-TDL. The pattern is apparent in columns of vertices.}
    \label{hex_lattice_7tdl}
\end{figure}

\end{proof}

We now find $\ch{\Omega_T}$, where $\Omega_T$ is the infinite triangular lattice. 

\begin{theorem}We have $\ch{\Omega_T} = 12$.
\end{theorem}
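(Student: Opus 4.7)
As in the previous two theorems, the plan is to prove $\chi_{td}(\Omega_T) \geq 12$ and $\chi_{td}(\Omega_T) \leq 12$ separately: the lower bound by iterated star-elimination and the upper bound by an explicit periodic labeling.

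For the lower bound, start from $\chi_{td}(\Omega_T) \geq \chi_{td}(K_{1,6}) = 7$ from Proposition \ref{RZ star result} (since $6$ is even). For each candidate $k \in \{7,8,9,10,11\}$, assume $\chi_{td}(\Omega_T) = k$ and build a star-elimination sequence in $\{1,2,\ldots,k\}$. For a prospective center label $j$ in a current admissible set $A$, I would delete $j$, $2j$, and $j/2$ (when applicable), then partition the remainder into symmetric pairs $(j-a, j+a)$ (avoiding staircases) together with lone elements; the label $j$ is $6$-star-vulnerable precisely when at most $5$ labels survive, since a vertex requires $6$ pairwise distinct neighbor labels. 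The elimination is organized to cascade: for $k=11$, for instance, the symmetric middle labels $5$ and $6$ are vulnerable first (each is caught in the pairs $(4,6),(3,7),(2,8),(1,9)$ and $(5,7),(4,8),(2,10),(1,11)$ respectively, yielding only $5$ survivors); with $5$ and $6$ gone, $4$ and $9$ lose enough pairs to fall next, and so on, until the surviving set has cardinality less than $7$ and we contradict the requirement that $7$ labels be available at every star $K_{1,6}$. Analogous (and easier) arguments handle $k=7,8,9,10$.

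For the upper bound, the WSR shortcut used in the square and hexagonal cases is unavailable: the data table shows that the minimal $7$-element WSR has maximum element $13$, so no $7$-coloring of $\Omega_T$ (in which each vertex sees all $6$ other colors) can be labeled with a single WSR inside $\{1,\ldots,12\}$. I would therefore construct a periodic $12$-TDL whose global label set may contain doubles and three-term arithmetic progressions but realizes neither on adjacent vertices nor on length-$2$ paths of $\Omega_T$. Concretely, view $\Omega_T$ as a stack of horizontal rows of $\mathbb{Z}$, with successive rows shifted so that each vertex has $2$ neighbors in its own row and $2$ in each of the rows above and below. Label each row periodically with a short block drawn from $\{1,\ldots,12\}$ and choose both the block and the inter-row shift so that every triangle and every three-vertex path in the fundamental domain avoids doubles, sandwiches, and staircases. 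Verification then reduces to a finite case check on the fundamental domain.

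The main obstacle is the upper bound. The lower-bound elimination is essentially mechanical once the order of the cascade is fixed, but the combination of degree $6$, the many overlapping triangles at each vertex, and the impossibility of using a $7$-element WSR inside $\{1,\ldots,12\}$ makes the search for a clean $12$-labeling delicate: a single bad triangle or length-$2$ path in the fundamental domain derails the entire construction. A systematic (and possibly computer-assisted) search through small fundamental domains is the natural way to locate one; once found, checking the four TDL properties is routine.
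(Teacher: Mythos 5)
Your overall strategy coincides with the paper's: star-elimination for the lower bound and an explicit periodic labeling for the upper bound. Your lower-bound cascade for $k=11$ even matches the paper's elimination order (the labels $5$ and $6$ fall first, then $4$ and $9$, then $2$ and $7$, leaving only five usable labels against the seven required by a $K_{1,6}$ with all-distinct labels), and your observation that the minimal $7$-element well-spaced row forces $13$ is exactly why the paper also needs a non-WSR construction for the final step. One small simplification you missed: the paper dispatches the candidates $k=7$ via $\ch{\Omega_T}\geq\ch{\Omega_S}=8$, since $\Omega_S$ is a subgraph of $\Omega_T$, rather than running star-elimination at $k=7$.

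The genuine gap is the upper bound. You describe a search plan for a periodic $12$-labeling but never exhibit one, and you explicitly concede that locating it is the ``main obstacle.'' The theorem is not proved until such a labeling is produced and verified; the existence of a valid $12$-TDL is precisely the nontrivial content of the upper bound, and there is no a priori argument in your write-up that the search must succeed. The paper closes this gap by displaying a concrete periodic $12$-TDL (Figure \ref{triangle_lattice_12tdl}) built from rows that alternate between the label blocks $1,4,5,2,3$ and $7,10,9,11,12$ with a shift, using the ten labels $\{1,2,3,4,5,7,9,10,11,12\}$; checking doubles, sandwiches, and staircases on its fundamental domain is then the routine finite verification you anticipated. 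Without that explicit construction (or a computer certificate standing in for it), your argument establishes only $12\leq\ch{\Omega_T}\leq 13$.
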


\begin{proof}
We will again find lower and upper bounds using well-spaced rows and star-elimination. It can be visually useful to restructure $\Omega_T$ such that $\Omega_S$ is clearly a subgraph of $\Omega_T$, which it is (analogously to the manner in which we represented $\Omega_H$ as a subgraph of $\Omega_S$). A simple way of doing this is to construct $\Omega_S$ and connect the top-left and bottom-right vertices of each $C_4$ by an edge, as shown in Figure \ref{square_subgraph_of_triangle}.

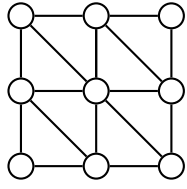
\begin{figure}
    \centering
    \begin{tikzpicture}[node distance = {10mm}, thick, main/.style = {draw, circle}]
    \node[main] (1) {};
    \node[main] (2) [right of=1] {};
    \node[main] (3) [right of=2] {};
    \node[main] (4) [below of=1] {};
    \node[main] (5) [below of=2] {};
    \node[main] (6) [below of=3] {};
    \node[main] (7) [below of=4] {};
    \node[main] (8) [below of=5] {};
    \node[main] (9) [below of=6] {};
    \draw (1) -- (2);
    \draw (2) -- (3);
    \draw (4) -- (5);
    \draw (5) -- (6);
    \draw (7) -- (8);
    \draw (8) -- (9);
    \draw (2) -- (5);
    \draw (1) -- (4);
    \draw (2) -- (5);
    \draw (3) -- (6);
    \draw (4) -- (7);
    \draw (5) -- (8);
    \draw (6) -- (9);
    \draw (1) -- (5);
    \draw (2) -- (6);
    \draw (4) -- (8);
    \draw (5) -- (9);
    
    \end{tikzpicture}
    \caption{Representation of $\Omega_T$ as a graph containing $\Omega_S$ as a subgraph.}
    \label{square_subgraph_of_triangle}
\end{figure}
Because each vertex of $\Omega_T$ is the center of a $K_{1,6}$ ($\Omega_T$ is 6-regular) and $\ch{K_{1,6}}=7$, $\ch{\Omega_T} \geq 7$, we will need to use seven distinct labels. In fact, we can do better: because $\Omega_S$ is a subgraph of $\Omega_T$, $\ch{\Omega_T} \geq \ch{\Omega_S}=8$. 

Star-elimination quickly increases the lower bound on $\ch{\Omega_T}$ to 11. We will now use star-elimination to increase the lower bound to 12. \\
Assume $\ch{\Omega_T}=11$. We will need to show that there are only six possible labels for the seven vertices in each $K_{1,6}$ subgraph of $\Omega_T$. We do this by eliminating six of the eleven possible labels for the neighbors of the central vertex of an arbitrary $K_{1,6}$ in a hypothetical 11-TDL of $\Omega_T$. 
\begin{enumerate}
    \item Assume 5 appears. Then we eliminate 5 and 10; we also eliminate one of 1 and 9, 2 and 8, 3 and 7, and 4 and 6, leaving 11 as well as one from each of these pairs, leaving only five possible labels for the six neighbors of 5; therefore, 5 does not appear.
    \item Assume 6 appears. Eliminate 3, 5, and 6. Also eliminate one of 1 and 11, 2 and 10, and 4 and 8. This leaves 7 and 9, plus one of each of the three pairs just checked, leaving only five possible neighbor labels; so 6 does not appear.
    \item Assume 4 appears. Eliminate 2, 3, 4, 5, 6, and 8. Also eliminate one of 1 and 7. This leaves 9, 10, and 11, plus one of 1 and 7; so 4 does not appear.
    \item Assume 9 appears. Eliminate 4, 5, and 6, as well as one of 7 and 11 and 8 and 10. This leaves 1, 2, and 3, plus two of 7, 8, 10, and 11; so 9 does not appear.
    \item Assume 2 appears. Eliminate 1, 2, 4, 5, 6, and 9. This leaves 3, 7, 8, 10, and 11; so 2 does not appear.
    \item Assume 7 appears. Eliminate 2, 4, 5, 6, and 9. Also eliminate one of 3 and 11. This leaves 1, 8, 9, 10, and one of 3 and 11; so 7 does not appear.
\end{enumerate}
We have now eliminated six labels, so $\ch{\Omega_T}>11$. \\

We use minimal well-spaced rows to find an upper bound on $\ch{\Omega_T}$. Because a TDL of $\Omega_T$ requires seven distinct labels, we will use the (unique) minimal well-spaced row with seven labels $\{1,3,4,9,10,12,13\}$. We can apply this WSR to $\Omega_T$ as in Figure \ref{triangle_lattice_13tdl}.

\begin{figure}
    \centering
     \begin{tikzpicture}[node distance = {10mm}, thick, main/.style = {draw, circle}]
     \node[main] (1) {1};
     \node[main] (2) [right of=1] {3};
     \node[main] (3) [right of=2] {4};
     \node[main] (4) [right of=3] {9};
     \node[main] (5) [right of=4] {10};
     \node[main] (6) [right of=5] {12};
     \node[main] (25) [right of=6] {13};
     \node[main] (7) [below of=1] {12};
     \node[main] (8) [below of=2] {13};
     \node[main] (9) [below of=3] {1};
     \node[main] (10) [below of=4] {3};
     \node[main] (11) [below of=5] {4};
     \node[main] (12) [below of=6] {9};
     \node[main] (26) [below of=25] {10};
     \node[main] (13) [below of=7] {9};
     \node[main] (14) [below of=8] {10};
     \node[main] (15) [below of=9] {12};
     \node[main] (16) [below of=10] {13};
     \node[main] (17) [below of=11] {1};
     \node[main] (18) [below of=12] {3};
     \node[main] (27) [below of=26] {4};
     \node[main] (19) [below of=13] {3};
     \node[main] (20) [below of=14] {4};
     \node[main] (21) [below of=15] {9};
     \node[main] (22) [below of=16] {10};
     \node[main] (23) [below of=17] {12};
     \node[main] (24) [below of=18] {13};
     \node[main] (28) [below of=27] {1};
     \draw (1) -- (2);
    \draw (2) -- (3);
    \draw (3) -- (4);
    \draw (4) -- (5);
    \draw (5) -- (6);
    \draw (6) -- (25);
    \draw (7) -- (8);
    \draw (8) -- (9);
    \draw (9) -- (10);
    \draw (10) -- (11);
    \draw (11) -- (12);
    \draw (12) -- (26);
    \draw (13) -- (14);
    \draw (14) -- (15);
    \draw (15) -- (16);
    \draw (16) -- (17);
    \draw (17) -- (18);
    \draw (18) -- (27);
    \draw (19) -- (20);
    \draw (20) -- (21);
    \draw (21) -- (22);
    \draw (22) -- (23);
    \draw (23) -- (24);
    \draw (24) -- (28);
    \draw (1) -- (7);
    \draw (2) -- (8);
    \draw (3) -- (9);
    \draw (4) -- (10);
    \draw (5) -- (11);
    \draw (6) -- (12);
    \draw (25) -- (26);
    \draw (7) -- (13);
    \draw (8) -- (14);
    \draw (9) -- (15);
    \draw (10) -- (16);
    \draw (11) -- (17);
    \draw (12) -- (18);
    \draw (26) -- (27);
    \draw (13) -- (19);
    \draw (14) -- (20);
    \draw (15) -- (21);
    \draw (16) -- (22);
    \draw (17) -- (23);
    \draw (18) -- (24);
    \draw (27) -- (28);
    \draw (1) -- (8);
\draw (2) -- (9);
\draw (3) -- (10);
\draw (4) -- (11);
\draw (5) -- (12);
\draw (6) -- (26);
\draw (7) -- (14);
\draw (8) -- (15);
\draw (9) -- (16);
\draw (10) -- (17);
\draw (11) -- (18);
\draw (12) -- (27);
\draw (13) -- (20);
\draw (14) -- (21);
\draw (15) -- (22);
\draw (16) -- (23);
\draw (17) -- (24);
\draw (18) -- (28);

     \end{tikzpicture}
    \caption{Subgraph of $\Omega_T$ with 13-TDL. Each row uses the elements of the minimal 7-element WSR.}
    \label{triangle_lattice_13tdl}
\end{figure}
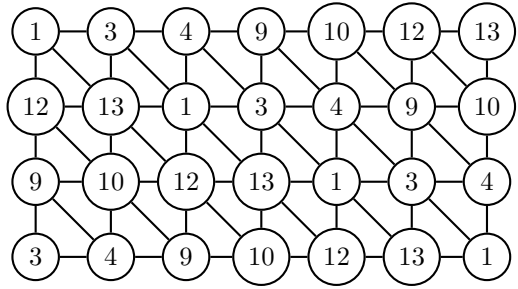

We can construct a 12-TDL of $\Omega_T$ using the labels $\{1,2,3,4,5,7,9,10,11,12\}$; see Figure \ref{triangle_lattice_12tdl}.

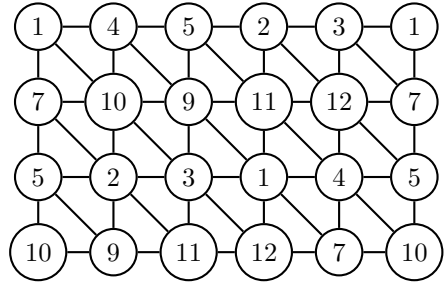
\begin{figure}
    \centering
     \begin{tikzpicture}[node distance = {10mm}, thick, main/.style = {draw, circle}]
     \node[main] (1) {1};
     \node[main] (2) [right of=1] {4};
     \node[main] (3) [right of=2] {5};
     \node[main] (4) [right of=3] {2};
     \node[main] (5) [right of=4] {3};
     \node[main] (6) [right of=5] {1};
     \node[main] (7) [below of=1] {7};
     \node[main] (8) [below of=2] {10};
     \node[main] (9) [below of=3] {9};
     \node[main] (10) [below of=4] {11};
     \node[main] (11) [below of=5] {12};
     \node[main] (12) [below of=6] {7};
     \node[main] (13) [below of=7] {5};
     \node[main] (14) [below of=8] {2};
     \node[main] (15) [below of=9] {3};
     \node[main] (16) [below of=10] {1};
     \node[main] (17) [below of=11] {4};
     \node[main] (18) [below of=12] {5};
     \node[main] (19) [below of=13] {10};
     \node[main] (20) [below of=14] {9};
     \node[main] (21) [below of=15] {11};
     \node[main] (22) [below of=16] {12};
     \node[main] (23) [below of=17] {7};
     \node[main] (24) [below of=18] {10};
     \draw (1) -- (2);
    \draw (2) -- (3);
    \draw (3) -- (4);
    \draw (4) -- (5);
    \draw (5) -- (6);
    \draw (7) -- (8);
    \draw (8) -- (9);
    \draw (9) -- (10);
    \draw (10) -- (11);
    \draw (11) -- (12);
    \draw (13) -- (14);
    \draw (14) -- (15);
    \draw (15) -- (16);
    \draw (16) -- (17);
    \draw (17) -- (18);
    \draw (19) -- (20);
    \draw (20) -- (21);
    \draw (21) -- (22);
    \draw (22) -- (23);
    \draw (23) -- (24);
    \draw (1) -- (7);
    \draw (2) -- (8);
    \draw (3) -- (9);
    \draw (4) -- (10);
    \draw (5) -- (11);
    \draw (6) -- (12);
    \draw (7) -- (13);
    \draw (8) -- (14);
    \draw (9) -- (15);
    \draw (10) -- (16);
    \draw (11) -- (17);
    \draw (12) -- (18);
    \draw (13) -- (19);
    \draw (14) -- (20);
    \draw (15) -- (21);
    \draw (16) -- (22);
    \draw (17) -- (23);
    \draw (18) -- (24);
    \draw (1) -- (8);
\draw (2) -- (9);
\draw (3) -- (10);
\draw (4) -- (11);
\draw (5) -- (12);
\draw (7) -- (14);
\draw (8) -- (15);
\draw (9) -- (16);
\draw (10) -- (17);
\draw (11) -- (18);
\draw (13) -- (20);
\draw (14) -- (21);
\draw (15) -- (22);
\draw (16) -- (23);
\draw (17) -- (24);

     \end{tikzpicture}
    \caption{Subgraph of $\Omega_T$ with 12-TDL. Alternate rows of vertices use the same five labels.}
    \label{triangle_lattice_12tdl}
\end{figure}
\end{proof}

We also have a result for the graph obtained from the cubic lattice. Set $\Omega_{Q_3}$ to be the cubic lattice graph. 

\begin{theorem} We have $12 \leq \ch{\Omega_{Q_3}} \leq 13$.
\end{theorem}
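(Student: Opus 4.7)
The plan is to handle the two inequalities separately, reusing machinery already set up earlier in the paper.

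For the lower bound $\ch{\Omega_{Q_3}} \geq 12$, I would observe that $\Omega_{Q_3}$ is $6$-regular, exactly like the triangular lattice $\Omega_T$. The six-step star-elimination argument that just pushed $\ch{\Omega_T}$ past $11$ uses only the local star $K_{1,6}$ at a vertex together with the no-double, no-staircase, and no-sandwich constraints; it appeals to no further structural feature of $\Omega_T$. Consequently the same elimination sequence at $x = 11$ (and the easier ones at $x = 7, 8, 9, 10$) applies verbatim to $\Omega_{Q_3}$, yielding $\ch{\Omega_{Q_3}} > 11$.

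For the upper bound $\ch{\Omega_{Q_3}} \leq 13$, I propose to exhibit a single explicit periodic labeling based on the minimal $7$-element well-spaced row $W = \{w_0, w_1, \ldots, w_6\} = \{1, 3, 4, 9, 10, 12, 13\}$, whose largest element is $13$. Identifying the vertex set of $\Omega_{Q_3}$ with $\mathbb{Z}^3$, define
\[
f(x, y, z) = w_{(x + 2y + 3z) \bmod 7}.
\]
The key point, which I expect to be the main obstacle to locate, is the choice of coefficient triple $(1, 2, 3)$: because $\{\pm 1, \pm 2, \pm 3\} \equiv \{1, 2, 3, 4, 5, 6\} \pmod 7$, the six neighbors of any vertex $(x,y,z)$ receive labels indexed by the six non-$k$ residues mod $7$, where $k = (x + 2y + 3z) \bmod 7$. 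Hence the closed star $\{v\} \cup N(v)$ is labeled bijectively with all of $W$.

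Because TDL-properness is a per-vertex condition, the WSR property of $W$ then finishes the argument. The six neighbor labels are pairwise distinct (ruling out sandwiches at $v$), no two of them together with $f(v)$ form an arithmetic progression (ruling out staircases through $v$, since $W$ contains no three-term AP), and no neighbor label equals $2 f(v)$ or $f(v)/2$ (ruling out doubles, since $W$ contains no element that is twice another). These three local conditions establish properties (2), (3), and (4) of a proper TDL at every vertex, and hence globally, so $f$ is a proper $13$-TDL of $\Omega_{Q_3}$. Closing the gap between this upper bound and the lower bound $12$ is left open by the theorem statement.
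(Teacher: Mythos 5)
Your proposal is correct and follows exactly the strategy the paper indicates for this theorem (star-elimination for the lower bound, a well-spaced row for the upper bound); the paper in fact gives no details at all, and your argument supplies them. The two key observations --- that the $x=11$ star-elimination used for $\Omega_T$ depends only on $6$-regularity and hence transfers verbatim to the cubic lattice, and that the linear form $(x+2y+3z) \bmod 7$ makes every closed neighborhood a rainbow copy of the AP-free, double-free set $\{1,3,4,9,10,12,13\}$ --- are both sound.
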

Here the upper bound is obtained from a well-spaced row, and the lower bound is obtained from star-elimination.


We also find the $\ch{G}$ for the infinite complete binary tree graph, defined as the graph starting with a single vertex and where every vertex has two children, repeating infinitely. 

\begin{theorem} Let $B$ be the graph of the infinite complete binary tree. We have $\ch{B} = 7$.
\end{theorem}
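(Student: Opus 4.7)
The plan is to prove $\ch{B} \geq 7$ by star-elimination and $\ch{B} \leq 7$ by lifting the $7$-TDL of $\Omega_H$ from the preceding theorem through the universal cover and restricting to $B$.

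For the lower bound, suppose for contradiction that $B$ has a proper $6$-TDL. Since $B$ is a tree, any two neighbors of a common vertex that shared a label would form a sandwich; thus every non-root vertex, having degree $3$, requires three pairwise distinct neighbor labels from $\{1, \ldots, 6\}$. I then run star-elimination on the centers $3, 2, 5$ in that order. A center labeled $3$ excludes $3$ and $6 = 2\cdot 3$, leaving $\{1, 2, 4, 5\}$; but the staircase pairs $(1,5)$ and $(2,4)$ each sum to $6$, so at most two distinct labels remain usable. Once label $3$ is barred at non-root vertices, a center labeled $2$ additionally loses $1$ and $4$ and is left with only $\{5, 6\}$. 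Once $3$ and $2$ are barred, a center labeled $5$ retains $\{1, 4, 6\}$ modulo the staircase pair $(4, 6)$, again at most two choices. At this point every non-root label lies in $\{1, 4, 6\}$, and each of these three values yields only two available neighbor labels after removing the center and its double/half. Since $B$ has infinitely many vertices at depth $\geq 4$ whose neighbors all sit at depth $\geq 3$ and are therefore non-root, no such vertex can legally be labeled, a contradiction.

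For the upper bound, let $T_3$ denote the $3$-regular infinite tree. Then $T_3$ is the universal cover of $\Omega_H$, and the covering projection $\pi \colon T_3 \to \Omega_H$ bijects the neighborhood of each vertex of $T_3$ onto the neighborhood of its image in $\Omega_H$. For any proper TDL $f$ of $\Omega_H$ the composition $\tilde f = f \circ \pi$ is therefore a proper TDL of $T_3$: adjacencies, doubles, sandwiches, staircases, and vertex/edge label conflicts are all determined on closed neighborhoods and preserved by local isomorphisms. The preceding theorem gives $\ch{\Omega_H} = 7$, so $\ch{T_3} \leq 7$. Finally, $B$ embeds as a subgraph of $T_3$ (fix a vertex of $T_3$, delete one of its three incident edges, and take the component containing that vertex), so Proposition~\ref{RZ Xtd inequality for subgraphs} yields $\ch{B} \leq \ch{T_3} \leq 7$.

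The main obstacle is the upper bound, specifically avoiding an ad-hoc construction. A direct depth-periodic labeling of $B$ is surprisingly delicate: a period-$3$ scheme already forces six distinct labels split into three pairs across the depth classes, and the arithmetic progressions in $\{1, \ldots, 7\}$ such as $(1, 3, 5)$, $(2, 4, 6)$, $(3, 5, 7)$, and $(5, 6, 7)$ keep producing a parent-center-child staircase in some depth class no matter how the six labels are distributed. The universal-cover argument bypasses this combinatorial search entirely, transferring the burden to the already-established $\ch{\Omega_H} = 7$.
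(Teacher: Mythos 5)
Your proof is correct. The lower bound is essentially the paper's: the paper disposes of it in one line by saying to ``use the same method as with the infinite hexagonal lattice,'' which is exactly the star-elimination of the labels $3$, $2$, $5$ in that order that you carry out; your extra bookkeeping about the degree-$2$ root (restricting the final contradiction to sufficiently deep vertices) is a harmless and in fact welcome refinement of a step the paper leaves implicit. The upper bound, however, takes a genuinely different route. The paper writes down an explicit recursive labeling: the root gets label $1$, and a vertex labeled $k$ assigns its two children the pair prescribed by a fixed substitution rule ($1\to\{3,5\}$, $2\to\{5,7\}$, $3\to\{2,7\}$, $4\to\{6,7\}$, $5\to\{4,7\}$, $6\to\{1,2\}$, $7\to\{1,6\}$), and then checks directly that no doubles, sandwiches, or staircases arise. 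You instead pull back the $7$-TDL of $\Omega_H$ along the universal covering $\pi\colon T_3\to\Omega_H$ and restrict to $B\subset T_3$. This is valid; the one point worth spelling out is that because $\pi$ is injective on each closed neighborhood, distinct neighbors $u\neq w$ of $v$ satisfy $\pi(u)\neq\pi(w)$, so a sandwich or staircase upstairs projects to a nondegenerate sandwich or staircase downstairs rather than collapsing onto a single edge, while doubles and vertex/edge clashes transfer edge by edge. Your route buys a reusable general principle (proper total difference labelings pull back along graph coverings, so $\chi_{td}$ of any connected $d$-regular graph bounds $\chi_{td}$ of the $d$-regular tree from above) together with the byproduct $\chi_{td}(T_3)=7$, at the cost of depending on the correctness of the hexagonal-lattice construction; the paper's substitution rule is self-contained but requires its own finite verification.
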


\begin{proof} Since this graph is 3-regular except at the base of the tree, we can use the same method as with the infinite hexagonal lattice to obtain that $\ch{B} \geq 7$.

To show that $\ch{G} \leq 7$, we give a specific labeling for this graph. We will supply a pair of numbers for each label used in the labeling. We label the base of the tree with 1. 
We then label each pair of vertices in the tree use the following set of rules. Each pair after the arrow indicates the two corresponding labels for the next pair of vertices. \\ \\
$1 \xrightarrow{} {3, 5}$\\
$2 \xrightarrow{} {5, 7}$\\
$3 \xrightarrow{} {2, 7}$\\
$4 \xrightarrow{} {6, 7}$\\
$5 \xrightarrow{} {4, 7}$\\
$6 \xrightarrow{} {1, 2}$\\
$7 \xrightarrow{} {1, 6}$\\

It is straightforward to check that the resulting labeling of the tree is a TDL. 

\end{proof}

One question that is implicitly raised by the results in this section is when an infinite graph has a finite total difference labeling number. Clearly if a graph  has arbitrarily high degree vertices then it cannot have a finite total difference labeling number (for that matter it cannot even have a finite coloring number).  This is essentially the only circumstance where the total difference labeling fails to exist. In particular we have:

\begin{theorem}\label{Infinite labelings exist} Let $G$ be a countable, infinite graph, where $\Delta(G)$ is finite. Then $\ch{G}$ is defined. Moreover, let $M$ be the largest element of a minimal well-spaced row with $\Delta(G)^2 +1 $ elements. Then $\ch{G} \leq M$.
\end{theorem}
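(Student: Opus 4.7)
The plan is a clean greedy-labeling argument against a fixed WSR. Let $W$ be a minimal well-spaced row of size $\Delta(G)^2+1$ with largest element $M$, and enumerate the vertices of $G$ as $v_1, v_2, \ldots$, which is possible since $G$ is countable. Process the vertices in this order and define $f(v_i)$ inductively to be any element of $W$ that does not already appear as the label of a previously-processed vertex lying within graph-distance $2$ of $v_i$. Since $W \subseteq \{1, \ldots, M\}$, this will yield $\ch{G} \leq M$ provided we can show (a) the induction never stalls, and (b) the resulting $f$ is a proper TDL.

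For (a), the key count is that the ball of graph-distance at most $2$ around $v_i$, excluding $v_i$ itself, has cardinality at most $\Delta(G) + \Delta(G)(\Delta(G) - 1) = \Delta(G)^2$, since each of $v_i$'s at most $\Delta(G)$ neighbors contributes at most $\Delta(G) - 1$ further vertices at distance $2$. Thus at most $\Delta(G)^2$ labels in $W$ are forbidden at step $i$, leaving at least one of the $|W| = \Delta(G)^2 + 1$ elements available. This is exactly why the size $\Delta(G)^2+1$ appears in the statement.

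For (b), I would appeal to the Rohatgi--Zhang characterization recalled in the excerpt: a total difference labeling is proper iff it avoids doubles, sandwiches, and staircases. Doubles and staircases are excluded automatically because every label lies in $W$, and by definition $W$ contains no pair $\{a, 2a\}$ and no three-term arithmetic progression. Sandwiches are excluded by the distance-$2$ rule: if $u$ and $w$ shared a common neighbor and $f(u) = f(w)$, then whichever of them was processed later would have found the other already labeled and within distance $2$, contradicting the choice of a non-forbidden label. The auxiliary requirement that no vertex label equals an incident edge label reduces to the absence of doubles, since $f(u) = |f(u) - f(v)|$ would force $f(v) \in \{0, 2f(u)\}$, neither of which is possible.

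The main obstacle is really just the combinatorial bookkeeping: verifying the distance-$2$ ball bound and checking that every ``bad'' configuration is ruled out either structurally by the WSR or dynamically by the greedy rule. Neither step is subtle, and together they give $\ch{G} \leq M$ as claimed, so in particular $\ch{G}$ is defined.
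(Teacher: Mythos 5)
Your proposal is correct and follows essentially the same route as the paper's proof: a greedy assignment of labels from the well-spaced row, using the bound $\Delta(G)+\Delta(G)(\Delta(G)-1)=\Delta(G)^2$ on the punctured distance-$2$ ball to guarantee an available label, with doubles and staircases ruled out structurally by the WSR and sandwiches ruled out by the distance-$2$ exclusion rule. Your write-up is in fact slightly more explicit than the paper's about the ball-size computation and about why the vertex-versus-incident-edge condition reduces to the absence of doubles.
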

\begin{proof} Assume $G$ is a countable, infinite graph with $\Delta(G)$ finite. Let $S$ be a well-spaced row with greatest element $M$ and let the vertices of $G$ be $v_1$, $v_2$, $v_3  \cdots$. 

We set $N_i$ to be the set of all vertices that are distance one or distance two away from $v_i$. Note that there are at most $\Delta(G)^2$ elements in any $N_i$.

Then we label the vertices inductively, assigning to each $v_i$ the smallest element of $S$ that has not yet been assigned to any label in $N_i$. Since $N_i$ itself has at most $\Delta(G)^2$ elements, and $S$ has  $\Delta(G)^2+1$ elements, we can always find such a label. The labeling that results is a total difference labeling. Since the labeling uses a well-spaced row, we just need to check that there are no sandwiches and no duplicate adjacent vertices, but both are ruled out since no vertex $v_j$ is ever labeled the same as any other vertex in $N_j$.
\end{proof}

The labeling given in Theorem \ref{Infinite labelings exist} has some drawbacks. It is frequently much less efficient than the ideal labeling for an infinite graph. For example, this labeling scheme would tell you that $\ch{\Omega_S} \leq 53$, since the most efficient well-spaced row with $\Delta(\Omega_S)^2+1=4^2+1=17$ elements has largest element 53. Second, the specific labeling you get from this is not canonical but depends sensitively on the order the vertices are listed.\\

Note also that although we have only stated Theorem \ref{Infinite labelings exist} for countable graphs, this phrasing is essentially a matter of convenience to make the proof straightforward and avoid any issues involving the Axiom of Choice. The theorem is also valid for larger cardinality graphs as long as one assumes Choice.

 \section{Cloning and hypercubes}

 Let $Q_d$ be the graph obtained from the $d$-dimensional hypercube. That is, $Q_0$ is a single vertex, $Q_1$ is the graph of two connected vertices, $Q_2$ is the square, and so on. \\
 
 Straightforward computation establishes that 
$\ch{Q_0}=1$, $\ch{Q_1}=3$, $\ch{Q_2} = 5$, $\ch{Q_3} = 7$, and $\ch{Q_4} = 9$. At this point, one might wish to guess that $\ch{Q_5} = 11$. Alas, this is not the case. In fact, $\ch{Q_5}=10$. We cannot give a complete description of $\ch{Q_d}$ but will prove an estimate using a concept we call cloning.  \\
 
  Given a graph G with vertices $g_1, g_2, g_3 \cdots $ (with possibly infinitely many vertices), we will define the {\emph {clone of G}} to 
be the Cartesian product of $G$ with $K_2$. This is the graph made by $x_1, x_2, x_3 \cdots $ and $y_1, y_2, y_3 \cdots $ with edges given by the following:

\begin{enumerate}
    \item $x_i$ is connected to $x_j$ if and only if $g_i$ is connected to $g_j$.
     \item $y_i$ is connected to $y_j$ if and only if $g_i$ is connected to $g_j$.
     \item $x_i$ is connected to $y_j$ if and only if $i=j$.
\end{enumerate}

In other words, to clone a graph $G$, make a copy of $G$ and connect each vertex of $G$ to its corresponding copy. Notice that the series of hypercubes is obtained by cloning the trivial graph. Given a graph $G$, we will write $\clone{G}$ to be its clone. Thus, for example, if $G$ is the square graph $Q_2$, then $\clone{G}$ is the 3-D cube graph $Q_3$, and $\clone{\clone{G}}$ is the 4-D hypercube graph $Q_4$, and so on. \\

Estimating $\ch{G}$ for hypercubes will rely on the following lemma:

\begin{lemma}\label{cloning lemma} If $G$ is a graph,  then $\ch{\clone{G}} \leq  2\ch{G}+1$.
\end{lemma}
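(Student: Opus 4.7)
The plan is to promote a $k$-TDL $f$ of $G$, with $k = \ch{G}$, to a labeling of $\clone{G}$ by shifting the ``$y$-copy'' away from the ``$x$-copy'' by exactly $k+1$. Concretely, I would define $F\colon V(\clone{G}) \to \{1, 2, \ldots, 2k+1\}$ by
\[
F(x_i) = f(g_i), \qquad F(y_i) = f(g_i) + (k+1).
\]
The clean feature of this definition is that the $x$-labels occupy $\{1,\ldots,k\}$ and the $y$-labels occupy $\{k+2,\ldots,2k+1\}$, separated by the dedicated ``middle'' value $k+1$, which will turn out to be exactly the label assigned to every ``rung'' edge $x_i y_i$.

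Next I would verify the total difference condition edge by edge. On an $x_ix_j$ or $y_iy_j$ edge the shift cancels, so the induced edge label is $|f(g_i)-f(g_j)|$, matching the edge label of the corresponding edge of $G$. On a rung edge $x_iy_i$ the induced edge label is $k+1$. So $F$ is certainly a total difference labeling; the content is that it is proper. This reduces, via Rohatgi and Zhang's criterion, to ruling out doubles, sandwiches, and staircases.

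The main obstacle, and the real work of the proof, is the case analysis for these three forbidden configurations. Intra-copy configurations (everything in the $x$-copy, or everything in the $y$-copy) immediately follow from properness of $f$ on $G$, since the shift is a translation and preserves arithmetic relations. The interesting cases are the ``mixed'' ones that cross a rung. Here the key numerical observation is that any mixed double, sandwich, or staircase forces an equation of the shape $f(g_a) = f(g_b) + (k+1)$ or $f(g_a) = 2f(g_b) + (k+1)$ or similar, each with one side at most $k$ and the other side at least $k+2$. Thus the separation by $k+1$ makes all mixed violations numerically impossible. I would also check the vertex--incident-edge condition: rung edges carry label $k+1$, while $x$-vertices have labels at most $k$ and $y$-vertices have labels at least $k+2$; and the $x_ix_j$, $y_iy_j$ edge labels lie in $\{1,\ldots,k-1\}$, so they cannot equal the $y$-endpoint's label either.

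Once all these small inequalities are confirmed, $F$ is a proper $(2k{+}1)$-TDL of $\clone{G}$, yielding $\ch{\clone{G}} \leq 2\ch{G}+1$. I expect the writeup to be dominated by enumerating the mixed sandwich and staircase cases around a rung, but each case collapses to the same ``one side is $\leq k$, the other is $\geq k+2$'' observation, so the proof should be short once the case list is laid out.
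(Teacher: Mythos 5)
Your proposal is correct and matches the paper's proof essentially exactly: both label the second copy by shifting the original TDL up by $\ch{G}+1$ and then rule out doubles, sandwiches, and staircases by noting that intra-copy configurations are inherited from $G$ while mixed configurations are numerically impossible due to the gap of $k+1$ between the two copies' label ranges.
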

\begin{proof} Let $f(G)$ be a TDL function of $G$. Let $H= \clone{G}$. We label $H$  with labeling
function $h$ defined as follows: For each vertex $g_i$ in $G$,  $h(x_i) = f(g_i)$ and  $h(y_i) = f(g_i) + X_td + 1$ (where, recall, the $x_i$ and $y_j$ are corresponding copies of vertices).
We claim that this is a TDL with largest label $2\ch{G}+1$. It is immediately apparent that $h(H)$ has maximum label $\ch{G} + \ch{G} +1 =2\ch{G}+1$. Thus,  we just need to check that there are no doubles, sandwiches or staircases. \\

There are no doubles among the set of  $x_i$ because they are directly labeled from our total difference labeling from G. There are no doubles among the $y_i$ because the smallest value of any $h(y_i)$ is greater than half the largest value of the $y_i$ labels. There is no double going from an $x_i$ to a $y_i$ because all the labels for the $y_i$ are greater than twice the largest $x_i$ value.\\

There are no sandwiches among the $x_i$ because they again have the same labeling as in $G$. There are no sandwiches among the $y_i$ because each of the $y_i$ are all the same values as the $x_i$ but increased by a constant. There are three possible ways there could be a sandwich with a combination of the $x_i$ and the $y_i$. First, there could be a sandwich of the form $x_a$, $y_b$, $x_c$, but this cannot happen because there is no set of connected vertices of that form. Second, there could be a sandwich of the form $y_a$, $x_b$, $y_c$, but again there are no connected vertices of that form. The third possible form of a sandwich is $x_a$, $z$, $y_b$ where $z$ may be either an $x$ or a $y$ vertex. But such a sandwich would require that $h(x_a) = h(y_b)$, which is never true.  \\

There are no staircases among the $x_i$ because the $x_i$ inherited their labels from the labeling of $G$. There are no staircases among the $y_i$ because their labels are all a constant up from the $x_i$ labels. There are no staircases involving both $x_i$ and $y_i$ because the difference between the largest $x_i$ and smallest $y_i$ is larger than the smallest difference between any  $x_i$ (which is also the smallest difference between any $y_i$). 
\end{proof}

We can apply Lemma \ref{cloning lemma} inductively to the hypercubes to get the following result: 

\begin{lemma}\label{Hypercube bound} For all $d$ we have $\ch{Q_d} \leq 2^{d+1}-1$. 
\end{lemma}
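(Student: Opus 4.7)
The plan is to prove this by straightforward induction on $d$, using Lemma \ref{cloning lemma} as the engine. The key structural observation, already noted in the excerpt immediately before Lemma \ref{cloning lemma}, is that the hypercubes form a tower under cloning: $Q_{d+1} = \clone{Q_d}$ for every $d \geq 0$. Once this identification is made, the bound $2^{d+1}-1$ is exactly the fixed point of the recursion produced by applying the cloning bound repeatedly, so the induction should close cleanly.

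For the base case, I would take $d=0$. Since $Q_0$ is a single vertex, it has $\ch{Q_0}=1$, which matches $2^{0+1}-1=1$. (The excerpt already records the values $\ch{Q_0}=1,\ \ch{Q_1}=3,\ \ch{Q_2}=5,\ \ch{Q_3}=7,\ \ch{Q_4}=9$, each of which satisfies the claimed inequality, so any of them could serve as a base case; taking $d=0$ is cleanest.)

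For the inductive step, assume $\ch{Q_d} \leq 2^{d+1}-1$. Since $Q_{d+1} = \clone{Q_d}$, Lemma \ref{cloning lemma} yields
\[
\ch{Q_{d+1}} \;\leq\; 2\,\ch{Q_d} + 1 \;\leq\; 2(2^{d+1}-1) + 1 \;=\; 2^{d+2} - 1,
\]
which is exactly the desired bound with $d$ replaced by $d+1$. This completes the induction.

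There is no real obstacle here; the only thing to be careful about is verifying the identification $Q_{d+1} = \clone{Q_d}$, but this follows directly from the definition of the hypercube as the Cartesian product $Q_d \mathbin{\square} K_2$, which matches the three cloning adjacency rules given in the excerpt. All the heavy lifting — handling doubles, sandwiches, and staircases across the two copies — was already absorbed into Lemma \ref{cloning lemma}, so the proof of Lemma \ref{Hypercube bound} itself is essentially a one-line induction.
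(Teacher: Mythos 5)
Your proof is correct and is exactly the argument the paper intends: the paper states only that Lemma \ref{cloning lemma} is ``applied inductively to the hypercubes,'' and your induction (base case $\ch{Q_0}=1=2^1-1$, inductive step $\ch{Q_{d+1}}\leq 2(2^{d+1}-1)+1=2^{d+2}-1$ via $Q_{d+1}=\clone{Q_d}$) fills in precisely those details. No issues.
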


It is pretty clear that Lemma \ref{Hypercube bound} gives what is often a weak bound. For example, we know that $\chi_{td}$ of the square lattice is 8. This would tell us that clone of the square lattice has $\chi_{td}$ at most 17. But the clone of the square lattice is a subgraph of the cubic lattice where we know $\chi_{td}$ is at most 13. In this case, Lemma \ref{cloning lemma} is giving a significant overestimate of the actual value of $\chi_td$.\\

When is it that $\ch{cl(G)}=2\ch{G}+1$? Are there infinitely many graphs with this property?  The only graph we are aware of where this bound is exactly equal is when $G$ is a lone vertex.  For certain families of graphs we can prove explicitly that this bound is weak. In the case of a path graph, the clone is just a cycle graph, and so that this bound is not best possible follows immediately from Proposition \ref{RZ cycle result} and Proposition \ref{RZ star result}. The next two results show that this bound is not best possible for complete graphs and star graphs.  \\

\begin{proposition} Let $n \geq 3$. Then $\ch{cl(K_n)} \leq  2\ch{K_n}$.
\end{proposition}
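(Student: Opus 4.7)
The plan is to sharpen the cloning construction of Lemma~\ref{cloning lemma} by choosing a shift of exactly the right size. Write $m = \ch{K_n}$. Given any proper TDL $f$ of $K_n$ with image $T = f(V(K_n))$ and any positive integer $s$, define $h(x_i) = f(g_i)$ and $h(y_i) = f(g_i) + s$ on $\mathrm{cl}(K_n)$. Rerunning the verification from the proof of Lemma~\ref{cloning lemma}, one sees that $h$ is a proper TDL with maximum label at most $\max T + s$ provided (a) $\max T + s \leq 2m$, (b) $s \notin T$, (c) $|a - b| \neq s$ for any two elements $a, b \in T$, and (d) $2a + s \notin T$ for every $a \in T$. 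Condition (a) keeps all labels inside $\{1,\dots,2m\}$; (b) prevents any edge $x_i y_i$ from becoming a double; (c) makes the label sets $T$ and $T+s$ disjoint and simultaneously rules out all cross staircases (each of the length-two paths $x_a{-}x_b{-}y_b$, $x_a{-}y_a{-}y_b$, $y_a{-}y_b{-}x_b$, and $y_a{-}x_a{-}x_b$ reduces algebraically to an identity of the form $|f(g_u)-f(g_v)|=s$); and (d) eliminates doubles within the shifted $y$-copy. The remaining within-copy TDL conditions (sandwiches and within-copy staircases) are inherited directly from $f$ being a proper TDL of $K_n$.

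The substance of the argument is therefore to exhibit, for each $n \geq 3$, a proper TDL $f$ of $K_n$ and a shift $s$ satisfying (a)--(d). Crucially, $f$ need not be an optimal TDL of $K_n$: $T$ may use labels larger than $m$ as long as $\max T + s \leq 2m$. For the base case $n = 3$ (where $m = 4$) one can take $T = \{1,5,6\}$ and $s = 2$, so that $\max T + s = 8 = 2\ch{K_3}$, $s \notin T$, $s$ is not among the differences $\{1,4,5\}$ within $T$, and $2T + s = \{4,12,14\}$ is disjoint from $T$. For general $n$, my strategy is to begin with an optimal TDL $f_0$ of $K_n$ (so $\max f_0(V) = m$ with $m = f_0(g_\ast)$ for a unique $\ast$) and ``promote'' the top label $m$ to a larger value $m'$, chosen so that the modified image $T = (f_0(V) \setminus \{m\}) \cup \{m'\}$ remains a well-spaced row (hence still a TDL of $K_n$), while opening a usable gap inside $\{m,\dots,2m\}$ in which an admissible shift $s$ can sit.

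The main obstacle is producing the promoted WSR $T$ and shift $s$ that satisfy (a)--(d) simultaneously. The difference set of $T$ has up to $\binom{n}{2}$ elements, the set $2T + s$ can collide with $T$, and (a) caps everything by $2m$. The critical slack comes from the inequality $m \geq n+1$ valid for $n \geq 3$ (since $\{1,\dots,n\}$ contains the three-term AP $1,2,3$ and so cannot itself be a WSR), which guarantees at least one free value in $\{1,\dots,m\}\setminus f_0(V)$ and provides room for the promotion inside $\{m+1,\dots,2m\}$. Small cases can be verified directly using the tabulated minimal well-spaced rows of Section~2; for larger $n$ one argues by counting, exploiting the sparsity of doubles and three-term APs inside $\{1,\dots,2m\}$ to see that after excluding the forbidden values from (b), (c), and (d), admissible shifts still remain.
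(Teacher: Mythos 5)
Your reduction to conditions (a)--(d) on a label set $T$ and a uniform shift $s$ is correct, and your $n=3$ witness $T=\{1,5,6\}$, $s=2$ checks out. But the proof stops where the real work begins: for general $n$ you never exhibit a pair $(T,s)$, and the counting argument you appeal to does not close as stated. Writing $m=\ch{K_n}$, condition (a) forces $s\le 2m-\max T$, while (b) and (c) forbid every element of $T$ and every difference of two elements of $T$. These forbidden sets can exhaust the entire admissible range: already for $n=4$ (where $m=8$), each of the four minimal well-spaced rows with four elements --- $\{1,3,7,8\}$, $\{2,3,7,8\}$, $\{1,5,7,8\}$, $\{1,5,6,8\}$ --- satisfies $T\cup(T-T)\supseteq\{1,\dots,8\}$, so no shift $s\le 8$ survives, and one is forced to hunt for a non-minimal $T$ (for instance $T=\{1,5,6,13\}$ with $s=2$ does work). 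Your ``promotion'' heuristic only makes matters harder on the (a) side, since raising $\max T$ shrinks the range available for $s$, and the inequality $m\ge n+1$ supplies just one unused value in $\{1,\dots,m\}$, nowhere near enough to beat the up-to-$\binom{n}{2}$ exclusions coming from (c). So the existence of an admissible $(T,s)$ for every $n\ge 3$ --- which is the entire content of the proposition --- remains unproved.

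The paper closes this gap by abandoning the uniform shift. It labels one copy of $K_n$ by a minimal well-spaced row $f(v_1)<\cdots<f(v_n)=m$, sets $f(u_i)=f(v_i)+m$ for $i\le n-1$, and assigns $u_n$ the small label ($1$ or $2$) that the row necessarily omits, since a well-spaced row cannot contain both $1$ and $2$. The shift $s=m$ makes your condition (c) vacuous (no two labels in $\{1,\dots,m\}$ differ by $m$), and the unique vertex at which $s=m$ would violate your condition (b) (namely $v_n$, because $m\in T$) is exactly the one treated separately. To rescue your uniform-shift approach you would need either an explicit family $(T_n,s_n)$ for all $n$, or a genuine density bound comparing the number of values excluded by (b), (c), and (d) with $2m-\max T$; neither appears in the write-up.
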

\begin{proof} Assume we have the graph $K_n$ with its most efficient total difference labeling. This labeling for $K_n$ is then the minimal well-spaced row with $n$ elements. We now consider the graph $cl(K_n)$ with one copy of $K_n$ labeled $v_1, v_2 \cdots v_n$ and the other  labeled $u_1, u_2 \cdots u_n$, and with $v_i$ connected to $u_i$ for $i$ satisfying $1 \leq i \leq n$.
We assign to each of the $v_i$ one of the labels from our well-spaced row from $K_n$, in increasing order, so $f(v_1)$ is smallest label and $f(v_n)$ has our largest label. We note that since this is a well-spaced row we must either be missing $1$ as a label or must be missing $2$ as a label in our well-spaced row. \\ 

Assume that $2$ is missing in our well-spaced row. Then for $1 \leq i \leq n-1$ we set $f(u_i) = f(v_i) + \ch{K_n}$, and set $f(u_n) =2$. We cannot have a double or a triple among the $v_i$ because the $v_i$ form a well-spaced row.  Since $f(v_n) >4$ we cannot have a double  between $v_n$ and $u_n$, and we cannot have a double or a staircase between the other $u_i$ and $v_i$ by the same logic as we had with our basic cloning argument in the proof of Lemma \ref{cloning lemma}. Finally, we cannot have a staircase involving $u_n$ since the only possible staircase involving $2$ is $1-2-3$ which cannot occur here.\\

The case where $1$ is the missing label is similar. 
\end{proof}

One obvious question is  if one has a random graph (in the Erd\H{o}s-Renyi sense), is it true that this Lemma is, with probability 1, very weak?  \\

\begin{conjecture} For any $\epsilon>0$, given the  Erd\H{o}s-Renyi random graph model, with probability 1, for a random graph $G$ and its clone $H$, $\ch{H} \leq (1+\epsilon)\ch{G}$.

\end{conjecture}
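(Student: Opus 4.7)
The plan is to start from an optimal TDL $f$ of $G = G(n,p)$ using $k = \ch{G}$ labels and extend it to a TDL of $\clone{G}$ using at most $m = \lceil(1+\epsilon)k\rceil$ labels. First I would note that for fixed $p \in (0,1)$, the random graph $G(n,p)$ asymptotically almost surely has diameter $2$, so Proposition \ref{Diameter 2 bound} gives $k \geq n$, and the budget of extra labels $m - k \geq \epsilon n$ grows linearly in $n$. Thus there is ample room to reuse labels across the two copies.

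The most natural first attempt is an affine shift: label the first copy of $G$ in $\clone{G}$ by $f$ and the second copy by $g(u_i) = f(v_i) + c$ for a well-chosen $c \in \{1, \ldots, m-k\}$. Because an affine shift preserves differences, sandwiches and staircases entirely inside the second copy are automatically ruled out---any such configuration would give the same bad configuration in the first copy, which $f$ avoids. The remaining obstructions are doubles inside the $u$-copy (requiring $c \neq f(v_i) - 2f(v_j)$ for $i \sim j$), bridge doubles (requiring $c$ not to equal any vertex label $f(v_i)$), and bridge sandwiches and staircases (requiring $c$ not to equal any edge label $|f(v_i) - f(v_j)|$). Writing $F$ for the union of these forbidden values, the goal reduces to showing $F \cap \{1, \ldots, \lfloor \epsilon k \rfloor\}$ is a proper subset of the candidate interval with probability tending to $1$.

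The main obstacle, and the crux of the proof, is the $u$-copy doubles condition: the set $\{f(v_i) - 2f(v_j) : i \sim j\}$ may have up to $2|E(G)| = \Theta(n^2)$ members, which could concentrate inside the candidate interval and saturate it. To overcome this I would randomize over the choice of $f$: for a typical random $G$ the set of optimal TDLs is large, so $F$ becomes a random set. A first-moment or Lov\'asz Local Lemma argument should then show that for a uniformly random optimal $f$, the expected density of $F$ inside $\{1, \ldots, \lfloor \epsilon k \rfloor\}$ is bounded away from $1$, yielding a valid $c$ with probability tending to $1$. If the pure affine shift fails, a fallback is to combine it with a sublinear number of free relabelings drawn from the fresh labels $\{k+1, \ldots, m\}$ to absorb residual conflicts, exploiting that each bad event in $\clone{G}$ depends on only $O(\Delta(G)^2)$ others. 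In either approach, the essential step is rigorously controlling how the differences $f(v_i) - 2f(v_j)$ distribute across the edges of a random $G$, which is where the Erd\H{o}s--Renyi hypothesis plays its decisive role.
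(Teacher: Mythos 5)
This statement is one of the paper's open \emph{conjectures}: the authors prove only the much weaker bound $\ch{\clone{G}} \leq 2\ch{G}+1$ (Lemma \ref{cloning lemma}, which is exactly your affine shift with $c = \ch{G}+1$) and offer no argument for the $(1+\epsilon)$ version, so there is no proof to compare yours against. More importantly, what you have written is a plan rather than a proof, and the step you yourself identify as the crux is precisely where it stops being an argument. The forbidden set for the shift $c$ coming from doubles inside the shifted copy has up to $2|E(G)| = \Theta(n^2)$ members, while the candidate window $\{1,\ldots,\lfloor\epsilon\ch{G}\rfloor\}$ has only $O(\epsilon\ch{G})$ elements; since the paper establishes only $n \leq \ch{G(n,p)} = O(n^{\log_2 3})$ (via Proposition \ref{Diameter 2 bound} and Theorem \ref{General greedy bound for a graph}), no counting or first-moment bound can rule out that these forbidden values cover the entire window. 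The appeals to ``the set of optimal TDLs is large,'' to a first-moment computation over a uniformly random optimal $f$, and to the Lov\'asz Local Lemma are all unsubstantiated; note in particular that for fixed $p$ one has $\Delta(G) = \Theta(n)$, so a dependency degree of $O(\Delta(G)^2)$ is $\Theta(n^2)$ and the Local Lemma buys you nothing.

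There is also a structural failure mode you do not address. The bridge-double constraint forces $c$ to avoid every vertex label actually used by $f$, and by Proposition \ref{Diameter 2 bound} all $n$ labels are distinct. If the optimal labeling is saturated, or merely packs its labels densely into an initial segment of $\{1,\ldots,\ch{G}\}$, then \emph{every} $c \leq \epsilon\ch{G}$ is forbidden and the pure affine shift fails outright; Section 5 of the paper shows that saturated optimal labelings are common, so this is not a corner case. Your fallback of ``a sublinear number of free relabelings'' is not developed: changing one vertex's label can create new doubles, sandwiches, and staircases with all $\Theta(n)$ of its neighbours and their neighbours, so it is not clear the repairs are local or that they terminate within the label budget. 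The reduction of the problem to the structure of the difference sets $\{f(v_i)-2f(v_j) : i\sim j\}$ is a reasonable way to begin attacking the conjecture, but as it stands every essential difficulty remains open.
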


Recall that the clone of a graph is its Cartesian product with $K_2$ and that the Cartesian product of two graphs $G_1,$ $G_2$ is the graph $H=G_1 \square G_2$ satisfying
\begin{enumerate}
    \item The set of vertices of $H$ is the Cartesian product of $V(G_1)$ and $V(G_2)$
    \item A vertex $(g_1,g_2) \in V(H)$ is adjacent to another vertex $(g_1', g_2') \in V(H)$ if and only if either $g_1=g_1'$ and $g_2$ is adjacent to $g_2'$ in $G_2$, or $g_2=g_2'$ and $g_1$ is adjacent to $g_1'$ in $G_1$.
\end{enumerate}

Using similar logic as Lemma \ref{cloning lemma} one can prove:

\begin{theorem} Let $H=  K_m \square G$ for some $m$. Then $\ch{H} \leq m\ch{G} + \frac{m(m-1)}{2}.$
\end{theorem}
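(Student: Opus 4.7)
The plan is to imitate the construction in Lemma \ref{cloning lemma}: I will fix a proper TDL $f: V(G) \to \{1, \ldots, \ch{G}\}$, choose an increasing sequence of offsets $0 = s_1 < s_2 < \cdots < s_m$, and define $h((i, g)) = f(g) + s_i$ for each vertex $(i, g)$ of $H = K_m \square G$. The natural target is $s_i = (i-1)\ch{G} + \binom{i}{2}$, i.e.\ increments $s_{i+1} - s_i = \ch{G} + i$, which gives maximum label $s_m + \ch{G} = m\ch{G} + \binom{m}{2}$ matching the claimed bound.

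The verification will directly generalize the one in Lemma \ref{cloning lemma}. Within each copy of $G$, the labeling is a translate of $f$, preserving all edge-label differences; since $s_i \geq \ch{G} + 1$ for $i \geq 2$, no new doubles can form inside a copy either. Each cross-edge between copies $i$ and $j$ over a common vertex $g$ will have the uniform label $|s_i - s_j| \geq \ch{G} + 1$, which exceeds every within-copy edge label (at most $\ch{G} - 1$), precluding edge-edge collisions at a shared vertex. The $K_m$-fiber over each $g$ inherits the label set $\{s_i + f(g) : 1 \leq i \leq m\}$, a translate of $\{s_1, \ldots, s_m\}$; it is a proper TDL of $K_m$ as soon as the $s_i$ admit no three-term arithmetic progression and no doubled pair $s_i = 2 s_j$. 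Mixed paths spanning one within-copy and one cross-copy edge reduce to similar magnitude comparisons and will be handled the same way.

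The crux will be ruling out three-term progressions $s_a + s_c = 2 s_b$ among the $s_i$, together with residual edge-vertex coincidences $s_j - 2 s_i \in \{1, \ldots, \ch{G}\}$. The AP equation simplifies to $(a + c - 2b)\ch{G} = 2\binom{b}{2} - \binom{a}{2} - \binom{c}{2}$; parameterizing with $p = b-a$ and $q = c-b$ and exploiting the strict convexity of $x(x-1)/2$, I would show the two sides have opposite signs in all but a small collection of exceptional triples that depend on particular values of $\ch{G}$. These exceptional triples, along with any edge-vertex coincidences, can be cleared by a slight upward adjustment of the offending $s_i$, which remains within the $\binom{m}{2}$-unit slack reserved in the bound. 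A greedy left-to-right selection of the $s_i$ subject to the combined constraints then produces a valid TDL of $H$ with maximum label at most $m\ch{G} + \binom{m}{2}$.
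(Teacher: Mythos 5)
The paper does not actually supply an argument for this theorem (it is asserted to follow from ``similar logic'' to Lemma \ref{cloning lemma}), so your proposal has to stand on its own, and it has a genuine gap. The explicit offsets $s_i=(i-1)\ch{G}+\binom{i}{2}$ do not work. Writing $X=\ch{G}$, you have $s_2=X+1$ and $s_3=2X+3$, so $s_3-2s_2=1$: whenever a vertex $g$ of $G$ carries the label $1$ (take $G=P_4$ with labels $1,4,3,1$ and $m=3$), the fiber over $g$ contains adjacent vertices labeled $1+s_2=X+2$ and $1+s_3=2X+4=2(X+2)$, a double. Similarly $s_5-2s_3=4$, so a vertex labeled $4$ yields a double between copies $3$ and $5$ once $X\ge 4$. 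These failures occur for every value of $\ch{G}$ and every $m\ge 3$; they are not ``exceptional triples that depend on particular values of $\ch{G}$,'' and they come from the double/edge--vertex condition $s_j-2s_i\notin\{1,\dots,\ch{G}\}$ rather than from the arithmetic-progression condition on which your analysis concentrates.

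The proposed repair also cannot work as described, because no slack remains: your maximum label $s_m+\ch{G}$ already equals $m\ch{G}+\binom{m}{2}$ exactly, so any upward adjustment of $s_m$, or of an interior $s_i$ that then forces the later offsets upward to preserve the separation conditions, overshoots the bound. The actual content of the theorem is the existence of offsets $0=s_1<\dots<s_m$ satisfying simultaneously (i) all pairwise differences at least $\ch{G}$ (so cross-copy edges cannot clash with edges inside a copy and no cross-copy sandwiches form), (ii) no three-term arithmetic progression, and (iii) $s_j-2s_i\notin\{1,\dots,\ch{G}\}$ for all $i<j$, while still achieving $s_m\le(m-1)\ch{G}+\binom{m}{2}$. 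Condition (iii) is genuinely restrictive for non-consecutive pairs: for the pair $(3,5)$ it demands $s_5\le 2s_3$ or $s_5\ge 2s_3+\ch{G}+1$, and a careless greedy choice of $s_3$ and $s_4$ makes the second alternative mandatory, costing an extra $\ch{G}$ that the budget $\binom{m}{2}$ cannot absorb when $\ch{G}$ is large. A correct proof must therefore exhibit a specific admissible sequence and verify the quantitative bound against all three families of constraints; your outline identifies most of the right constraints but produces a sequence that violates them, and the repair strategy is not shown to terminate within the bound.
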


It seems natural to ask the following:

\begin{question} Is there a function $f(x,y)$ such that for any graphs $G_1$ and $G_2$ we have $\ch{G_1 \square G_2} \leq f(\ch{G_1}, \ch{G_2})$?
\end{question}

Another natural question is to ask whether it is always true for any graph $G$ that $\ch{cl(G)} > \ch{G}.$ This inequality is in fact false for the Petersen graph. If $G$ is the Petersen graph, then $\ch{cl(G)} = \ch{G} = 10$. This raises the following question:

\begin{question} Is there a graph $G$ such that for any non-empty graph $H$,  we have $\ch{H \square G} > \ch{G}$?
\end{question}

Other notions of graph products exist, including the tensor product and strong product, and it may be natural to ask similar questions about how total difference labeling interacts with those products. 

\section{Saturated graphs}

 We say that a total difference labeling of a graph $G$ with order $n$ is a {\it saturated labeling} if the TDL's labels are exactly $\{1,2,3 \cdots \ch{G}\}$, and  $\ch{G}=n$. We define a {\it saturable graph} as a graph that has at least one saturated labeling. Examples of saturable graphs are $C_5$ and the Petersen graph. \\

The graph $P_4$ is an example of a special case of a saturable graph. Note that $\ch{P_4}=4$, and that it has two 4-TDLs, one using the labels $\{1,3,4\}$ and one using $\{1,2,3,4\}$ (see Figures \ref{nonsaturated_P4} and \ref{saturated_P4_1}).\\
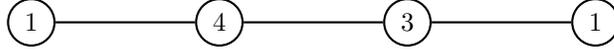
\begin{figure}
     \centering
     \begin{tikzpicture}[node distance = {25mm}, thick, main/.style = {draw, circle}]
     \node[main] (1) {1};
     \node[main] (2) [right of=1] {4};
     \node[main] (3) [right of=2] {3};
     \node[main] (4) [right of=3] {1};
     \draw (1) -- (2);
     \draw (2) -- (3);
     \draw (3) -- (4);
     \end{tikzpicture}
     \caption{A non-saturated labeling of $P_4$.}
     \label{nonsaturated_P4}
 \end{figure}
 \begin{figure}
     \centering
     \begin{tikzpicture}[node distance = {25mm}, thick, main/.style = {draw, circle}]
     \node[main] (1) {4};
     \node[main] (2) [right of=1] {1};
     \node[main] (3) [right of=2] {3};
     \node[main] (4) [right of=3] {2};
     \draw (1) -- (2);
     \draw (2) -- (3);
     \draw (3) -- (4);
     \end{tikzpicture}
     \caption{A saturated labeling of $P_4$.}
     \label{saturated_P4_1}
 \end{figure}
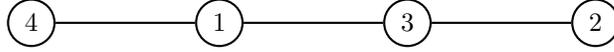
 
Note that the first of these labelings does not use the label 2, and is therefore not a saturated labeling. However, the second labeling is a saturated labeling.\\

For $C_5$, the saturated labeling is the only minimal labeling for the graph. Motivated by this difference in behavior of $P_4$ and $C_5$,  we call a graph $G$ {\it supersaturable} if every one of its $\ch{G}$-total difference labelings is a saturated labeling.\\

\begin{theorem} Let $G$ be a saturable graph with diameter at most 2. Then $G$ is supersaturable.
\end{theorem}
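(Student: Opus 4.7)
The proof should be essentially a one-line consequence of Proposition \ref{Diameter 2 bound} once the definitions are unpacked. Let me outline the plan.

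The plan is to use the forcing power of the diameter-2 bound on vertex labels. Let $n = |V(G)|$. Since $G$ is saturable, by definition there exists a saturated labeling of $G$, which forces $\chi_{td}(G) = n$ (and tells us that the labels $\{1, 2, \ldots, n\}$ can all be realized). So to prove supersaturability I must show that \emph{every} $n$-TDL of $G$ uses the full set $\{1, 2, \ldots, n\}$.

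Take an arbitrary $n$-TDL $f$ of $G$; by definition $f$ maps $V(G) \cup E(G)$ into $\{1, 2, \ldots, n\}$. Because $G$ has diameter at most $2$, Proposition \ref{Diameter 2 bound} applies and guarantees that the $n$ vertex labels $f(v_1), f(v_2), \ldots, f(v_n)$ are pairwise distinct. But these $n$ distinct positive integers all lie in $\{1, 2, \ldots, n\}$, which has exactly $n$ elements, so by a pigeonhole/counting step the vertex labels are precisely $\{1, 2, \ldots, n\}$. Hence every label in this set already appears on some vertex, and the labeling is saturated.

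The only subtlety worth flagging is making sure the definition of saturated labeling is satisfied \emph{on the nose}: it requires $\chi_{td}(G) = n$, which comes from saturability, and it requires the label set of the TDL to be $\{1, \ldots, \chi_{td}(G)\}$, which we just obtained from the vertex labels alone (the edge labels are determined as absolute differences and automatically lie in $\{1, \ldots, n-1\}$, so they can only reinforce, not extend, the set). There is no real obstacle here; the argument is essentially a direct corollary of Proposition \ref{Diameter 2 bound} combined with the definitions, and the whole proof should fit in a few lines.
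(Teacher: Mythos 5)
Your proposal is correct and follows essentially the same route as the paper: the paper likewise argues that diameter at most $2$ forces all vertex labels to be distinct (via the sandwich argument underlying Proposition \ref{Diameter 2 bound}), and then concludes by the same pigeonhole step that any $\ch{G}$-TDL of a saturable such graph must use all of $\{1,\dots,n\}$. The only cosmetic difference is that you cite the proposition directly while the paper re-derives the distinctness claim inline.
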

\begin{proof}
Assume $G$ is a saturable graph with diameter at most 2. If a graph has diameter at most 2, then each vertex is at most distance 2 from every other vertex. Therefore, using a label twice in the graph would always cause a sandwich. Since all vertices must be labeled with different labels, and the graph has a proper saturated labeling, all of its minimal labelings must be saturable.
\end{proof}

One might imagine that in a minimal total distancing labeling of a graph, any pair of vertices with distance greater than 2 can always be labeled with the same label. Since they are sufficiently far apart, the two will never be a part of the same labeling obstruction (a double or triple) and therefore labeling them identically should not cause any issues in the final labeling. However, this is not the case, as there are supersaturable graphs with diameter greater than 2. An example is the modified ``triforce graph'', shown in Figure \ref{triforce1}, whose four 6-TDLs, all saturated, are shown in Figure \ref{triforce_labelings}. \\

\begin{figure}
    \centering
    \begin{tikzpicture}[node distance = {15mm}, thick, main/.style = {draw, circle}] 
    \node[main] (1) {};
    \node[main] (2) [below left of=1] {};
    \node[main] (3) [below right of=1] {};
    \node[main] (4) [below left of=2] {};
    \node[main] (5) [below left of=3] {};
    \node[main] (6) [below right of=3] {};
    \draw (1) -- (2);
    \draw (1) -- (3);
    \draw (2) -- (3);
    \draw (2) -- (5);
    \draw (3) -- (5);
    \draw (3) -- (6);
    \draw (4) -- (5);
    \draw (5) -- (6);
    \end{tikzpicture}
    \caption{A triforce graph missing an edge.}
    \label{triforce1}
\end{figure}
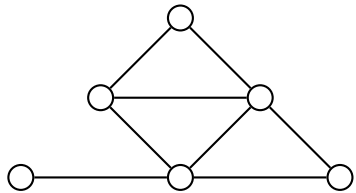

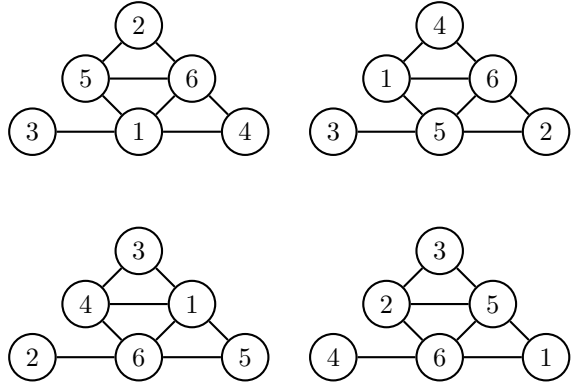
\begin{figure}
    \centering
    \begin{tikzpicture}[node distance = {10mm}, thick, main/.style = {draw, circle}] 
    \node[main] (1) {3};
    \node[main] (2) [below left of=1] {4};
    \node[main] (3) [below right of=1] {1};
    \node[main] (4) [below left of=2] {2};
    \node[main] (5) [below left of=3] {6};
    \node[main] (6) [below right of=3] {5};
    \draw (1) -- (2);
    \draw (1) -- (3);
    \draw (2) -- (3);
    \draw (2) -- (5);
    \draw (3) -- (5);
    \draw (3) -- (6);
    \draw (4) -- (5);
    \draw (5) -- (6);
    \begin{scope}[shift={(4,0)}]
        \node[main] (1) {3};
        \node[main] (2) [below left of=1] {2};
        \node[main] (3) [below right of=1] {5};
        \node[main] (4) [below left of=2] {4};
        \node[main] (5) [below left of=3] {6};
        \node[main] (6) [below right of=3] {1};
        \draw (1) -- (2);
        \draw (1) -- (3);
        \draw (2) -- (3);
        \draw (2) -- (5);
        \draw (3) -- (5);
        \draw (3) -- (6);
        \draw (4) -- (5);
        \draw (5) -- (6);
        \begin{scope}[shift={(-4,3)}]
            \node[main] (1) {2};
            \node[main] (2) [below left of=1] {5};
            \node[main] (3) [below right of=1] {6};
            \node[main] (4) [below left of=2] {3};
            \node[main] (5) [below left of=3] {1};
            \node[main] (6) [below right of=3] {4};
            \draw (1) -- (2);
            \draw (1) -- (3);
            \draw (2) -- (3);
            \draw (2) -- (5);
            \draw (3) -- (5);
            \draw (3) -- (6);
            \draw (4) -- (5);
            \draw (5) -- (6);
            \begin{scope}[shift={(4,0)}]
                \node[main] (1) {4};
                \node[main] (2) [below left of=1] {1};
                \node[main] (3) [below right of=1] {6};
                \node[main] (4) [below left of=2] {3};
                \node[main] (5) [below left of=3] {5};
                \node[main] (6) [below right of=3] {2};
                \draw (1) -- (2);
                \draw (1) -- (3);
                \draw (2) -- (3);
                \draw (2) -- (5);
                \draw (3) -- (5);
                \draw (3) -- (6);
                \draw (4) -- (5);
                \draw (5) -- (6);
            \end{scope}
        \end{scope}
    \end{scope}
    \end{tikzpicture}
    \caption{Saturated labelings of the triforce graph.}
    \label{triforce_labelings}
\end{figure}

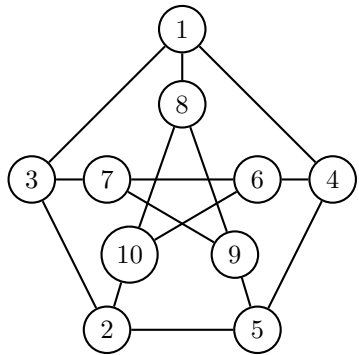
\begin{figure}
    \centering
    \begin{tikzpicture}[thick, main/.style = {draw, circle}]
    \node[main] (1) at (2,5) {1};
    \node[main] (2) at (1,1) {2};
    \node[main] (3) at (0,3) {3};
    \node[main] (4) at (4,3) {4};
    \node[main] (5) at (3,1) {5};
    \node[main] (8) at (2,4) {8};
    \node[main] (9) at (2.7,2) {9};
    \node[main] (10) at (1.3,2) {10};
    \node[main] (6) at (3,3) {6};
    \node[main] (7) at (1,3) {7};
    \draw (1) -- (3);
    \draw (1) -- (4);
    \draw (1) -- (8);
    \draw (2) -- (3);
    \draw (2) -- (5);
    \draw (2) -- (10);
    \draw (3) -- (7);
    \draw (4) -- (5);
    \draw (4) -- (6);
    \draw (5) -- (9);
    \draw (6) -- (7);
    \draw (6) -- (10);
    \draw (7) -- (9);
    \draw (8) -- (9);
    \draw (8) -- (10);
    \end{tikzpicture}
    \caption{A total difference labeling of the Petersen graph.}
    \label{petersen_with_labeling}
\end{figure}

We used a computer algorithm to test all 112 unique order-6 graphs and found 32 such graphs $G$ where $\ch{G}=6$. These 32 graphs graphs were then individually tested for saturability, and a saturated labeling was found for each of the graphs; therefore, all order-6 graphs with $\chi_{td}=6$ are saturable.\\

It is easier to find  by hand all saturable graphs with order less than six: There exist only seven of them. It is left as an exercise to the reader to find these graphs.\\

All of the saturable graphs with order less than six have diameter at most 2 and are therefore all supersaturable. However, even though not all saturable order-6 graphs have diameter at most 2, some are still supersaturable.\\

One might wonder if for any graph $G$ with order $n$ with $\ch{G}=n$, $G$ is saturable. While this is true for all graphs with order at most $6$, we can produce counterexamples of larger order. For example, consider the graph $I$ given in Figure \ref{graphI}, which is made from two $K_4$'s, with one vertex in each connected to the other by an edge. \\

\begin{figure}
    \centering
    \begin{tikzpicture}[node distance = {25mm}, thick, main/.style = {draw, circle}]
    \node[main] (1) {};
    \node[main] (2) [right of=1] {};
    \node[main] (3) [below of=1] {};
    \node[main] (4) [right of=3] {};
    \node[main] (5) [right of=2] {};
    \node[main] (6) [right of=5] {};
    \node[main] (7) [below of=5] {};
    \node[main] (8) [right of=7] {};
    \draw (1) -- (2);
    \draw (1) -- (3);
    \draw (1) -- (4);
    \draw (2) -- (3);
    \draw (2) -- (4);
    \draw (3) -- (4);
    \draw (5) -- (6);
    \draw (5) -- (7);
    \draw (5) -- (8);
    \draw (6) -- (7);
    \draw (6) -- (8);
    \draw (7) -- (8);
    \draw (4) -- (7);
    \end{tikzpicture}
    \caption{Graph $I$.}
    \label{graphI}
\end{figure}

\begin{proposition}
Where $I$ is the graph in Figure \ref{graphI}, $\ch{I}=8$, $I$ is connected, $I$ has order $8=\ch{I}$, and $I$ is not saturable. 
\end{proposition}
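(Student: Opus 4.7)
My plan is to tackle the four assertions of the proposition separately; order and connectedness are built into the description of $I$, while the two substantive points are the computation of $\ch{I}$ and the non-saturability statement.

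For $\ch{I} \geq 8$, I would invoke Proposition~\ref{RZ Xtd inequality for subgraphs} with the subgraph $K_4 \subseteq I$. In any TDL of $K_4$ every pair of vertices is adjacent and every triple lies on a path, so the four vertex labels must form a $4$-element well-spaced row; by the data table, $E(4) = 8$, so $\ch{K_4} \geq 8$. Since the WSR $\{1,3,7,8\}$ can in fact be assembled into a valid TDL of $K_4$ (quick check of property (4) and the edge labels), one actually has $\ch{K_4} = 8$, giving $\ch{I} \geq 8$.

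For $\ch{I} \leq 8$, I would exhibit an explicit $8$-TDL. Label the first $K_4$ by the WSR $\{1,3,7,8\}$ with the bridge vertex $a$ assigned $7$, and label the second $K_4$ by the WSR $\{1,5,6,8\}$ with the bridge vertex $b$ assigned $5$. Since each $K_4$ is labeled by a WSR, all internal doubles, sandwiches, and staircases vanish, and property~(4) is routine. The remaining verifications concern the bridge edge $ab$ (with label $|7-5| = 2$): neither of $7, 5$ is double the other; the bridge label $2$ is different from the three edge labels $\{6,4,1\}$ at $a$ inside the first $K_4$ and from $\{4,1,3\}$ at $b$ inside the second; no sandwich is created at $a$ because $f(b) = 5 \notin \{1,3,8\}$, and symmetrically at $b$; and no staircase appears because the forbidden sums $2f(a) = 14$ and $2f(b) = 10$ are not attained by any pair of labels drawn from the neighbor label sets $\{1,3,8,5\}$ and $\{1,6,8,7\}$. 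Combined with the lower bound, this gives $\ch{I} = 8$.

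For non-saturability, suppose for contradiction that $I$ admits a TDL using each element of $\{1, 2, \ldots, 8\}$ exactly once. The four labels on each $K_4$ must form a $4$-element WSR, and together these two WSRs partition $\{1, \ldots, 8\}$. But the data table shows $E(4) = 8$, so every $4$-element WSR contains $8$; hence $8$ would lie in both parts, which is impossible. The main obstacle in the whole argument is the upper bound construction: one cannot reuse $\{1,3,7,8\}$ on both sides, because then a sandwich through the bridge becomes unavoidable, and the choice of the second WSR together with the bridge endpoint requires some searching. By contrast, the non-saturability claim collapses at once to the clean observation that $E(4) = 8$ forces the label $8$ into both halves of any purported partition.
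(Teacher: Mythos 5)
Your proof is correct and follows essentially the same route as the paper: the lower bound comes from $\ch{K_4}=8$ via Proposition~\ref{RZ Xtd inequality for subgraphs}, the upper bound from an explicit $8$-TDL (yours uses the WSRs $\{1,3,7,8\}$ and $\{1,5,6,8\}$ with bridge labels $7$ and $5$ in place of the paper's $\{1,5,7,8\}$ and $\{2,3,7,8\}$ with bridge labels $1$ and $3$, and your bridge verifications check out), and non-saturability from the observation that $E(4)=8$ forces the label $8$ into both disjoint copies of $K_4$. One wording quibble: a $4$-element WSR need not contain $8$ in general (e.g.\ $\{1,3,4,9\}$); the statement you actually need, and which your partition context supplies, is that any $4$-element WSR whose elements all lie in $\{1,\dots,8\}$ must have maximum exactly $8$.
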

\begin{proof}
 Note that $\ch{I} \geq 8$ as $\ch{K_4}=8$. The labeling in Figure \ref{graphIwithTDL} shows that $\ch{I} \leq 8$.\\
 
 Note further that $I$ has order 8. However, $I$ is not a saturable graph since any TDL of a copy of $K_4$ which uses labels at most 8 must contain an 8 as a label. Thus, $I$'s most efficient labeling contains two 8s and is thus not saturable.
  \end{proof}
  
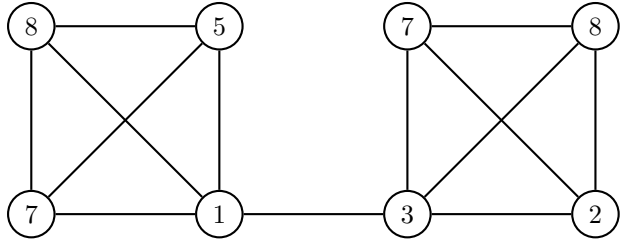
\begin{figure}
    \centering
    \begin{tikzpicture}[node distance = {25mm}, thick, main/.style = {draw, circle}]
    \node[main] (1) {8};
    \node[main] (2) [right of=1] {5};
    \node[main] (3) [below of=1] {7};
    \node[main] (4) [right of=3] {1};
    \node[main] (5) [right of=2] {7};
    \node[main] (6) [right of=5] {8};
    \node[main] (7) [below of=5] {3};
    \node[main] (8) [right of=7] {2};
    \draw (1) -- (2);
    \draw (1) -- (3);
    \draw (1) -- (4);
    \draw (2) -- (3);
    \draw (2) -- (4);
    \draw (3) -- (4);
    \draw (5) -- (6);
    \draw (5) -- (7);
    \draw (5) -- (8);
    \draw (6) -- (7);
    \draw (6) -- (8);
    \draw (7) -- (8);
    \draw (4) -- (7);
    \end{tikzpicture}
    \caption{Graph $I$ with non-saturated labeling.}
    \label{graphIwithTDL}
\end{figure}

$I$ is one graph in a general class of counterexamples. When there are at least two minimal well-spaced rows of size $n$ (\textit{i.e.}, $D(n) \geq 2)$) we may often construct a counterexample this way by connecting two copies of $K_n$ as we did with $I$. \\

\end{document}